\setlist[enumerate,1]{label=(\roman*)}  
\DeclareMathOperator{\AVaR}{\mathsf {AV@R}}
\DeclareMathOperator{\VaR}{\mathsf{V@R}}
\DeclareMathOperator{\sign}{sign}
\newtheorem{theorem}{Theorem}
\newtheorem{corollary}[theorem]{Corollary}
\newtheorem{definition}[theorem]{Definition}
\newtheorem{lemma}[theorem]{Lemma}
\newtheorem{proposition}[theorem]{Proposition}
\newtheorem{remark}[theorem]{Remark}
\newenvironment{proof}[1][Proof]{\textbf{#1.} }{\ \rule{0.5em}{0.5em}}
\DeclareMathOperator{\E}{\mathds E} 
\DeclareMathOperator{\EVaR}{\mathsf {EV@R}}
\begin{document}

\begin{center}
	\bigskip
	
	\bigskip \textbf{FRACTIONAL\ RISK\ PROCESS\ IN\ INSURANCE\\[2em]}
	\textbf{Arun KUMAR$^{1}$ Nikolai LEONENKO$^{2}$  Alois
		PICHLER$^{3}$}
	
\end{center}

\begin{abstract}
The Poisson process suitably models the time of successive events and thus has numerous applications in statistics, in economics, it is also fundamental in queueing theory. 
Economic applications include trading and nowadays particularly high frequency trading. Of outstanding importance are applications in insurance, where arrival times of successive claims are of vital importance.

It turns out, however, that real data do not always support the genuine Poisson process. This has lead to variants and augmentations such as time dependent and varying intensities, for example. 

This paper investigates the fractional Poisson process. We introduce the process and elaborate its main characteristics. The exemplary application considered here is the Carm{\'e}r--Lundberg theory and the Sparre Andersen model. 
The fractional regime leads to initial economic stress.  On the other hand we demonstrate that the average capital required to recover a company after ruin does not change when switching to the fractional Poisson regime. We finally address particular risk measures, which allow simple evaluations in an environment governed by the fractional Poisson process.



\end{abstract}
%
%
%
%

$^{1}$Department of Mathematics, Indian Institute of Technology Ropar, Rupnagar, Punjab 140001, India

\href{arun.kumar@iitrpr.ac.in}{arun.kumar@iitrpr.ac.in}
\medskip

$^{2}$Cardiff School of Mathematics, Cardiff University, Senghennydd Road,
Cardiff
CF24 4AG, UK

\href{LeonenkoN@Cardiff.ac.uk}{LeonenkoN@Cardiff.ac.uk}
\medskip

$^{3}$Chemnitz University of Technology, Faculty of Mathematics, Chemnitz,
Germany

\href{alois.pichler@math.tu-chemnitz.de}{%
alois.pichler@math.tu-chemnitz.de}

\bigskip\bigskip

\textbf{Key words}: Fractional Poisson process, convex risk measures

Mathematics Subject Classification (1991):

\bigskip

\bigskip

\section{Introduction}
Companies, which are exposed to random orders or bookings, face the threat of ruin. But what happens if bookings fail to appear and ruin actually happens?
The probability of this event cannot be ignored and is of immanent and substantial importance from economic perspective. Recall that some nations have decided to bail-out banks, so a question arising naturally is how much capital is to be expected to bail-out a company after ruin, if liquidation is not an option. 

This paper introduces the average capital required to recover a company after ruin. It turns out that this quantity is naturally related to classical, convex risk measures.
The corresponding arrival times of random events as bookings, orders, or stock orders on an exchange are often comfortably modelled by a Poisson process --- this process is a fundamental building block to model the economy reality of random orders.

From a technical modelling perspective, the inter-arrival times of the Poisson process are exponentially distributed and independent \cite{Khinchin1969}. However, real data do not always support this property or feature (see, e.g., \cite{Grandell1991, KLS, Raberto2002, Scalas2004} and references therein). 
 To demonstrate the deviation we display arrival times of two empirical data sets consisting of oil futures high frequency trading (HFT) data and the classical Danish fire insurance data. 
Oil futures HFT data is considered for the period from July 9, 2007 until August 9, 2007. There is a total of 951,091 observations recorded over market opening hours. 
The inter-arrival times are recorded in seconds for both up-ticks and down-ticks (cf.\ Figures~\ref{fig:1e} and~\ref{fig:1c}). 
In addition, the Danish fire insurance data is available for a period of ten years 
until December 31, 1990, consisting of 2167 observations: 
the time unit for inter-arrival times in Figure~\ref{fig:1a} is days.
The charts in Figure~\ref{fig:1} display the survival function of inter-arrival times of the data on a logarithmic scale and compares them with the exponential distribution. The data are notably not aligned, as is the case for the Poisson process. By inspection it is thus evident that arrival times of the data apparently do not follow a genuine Poisson process.
\begin{figure}[ht!]
	\begin{center}
		\subfloat[Oil futures HFT up-ticks]{\includegraphics[width= 0.33\textwidth]{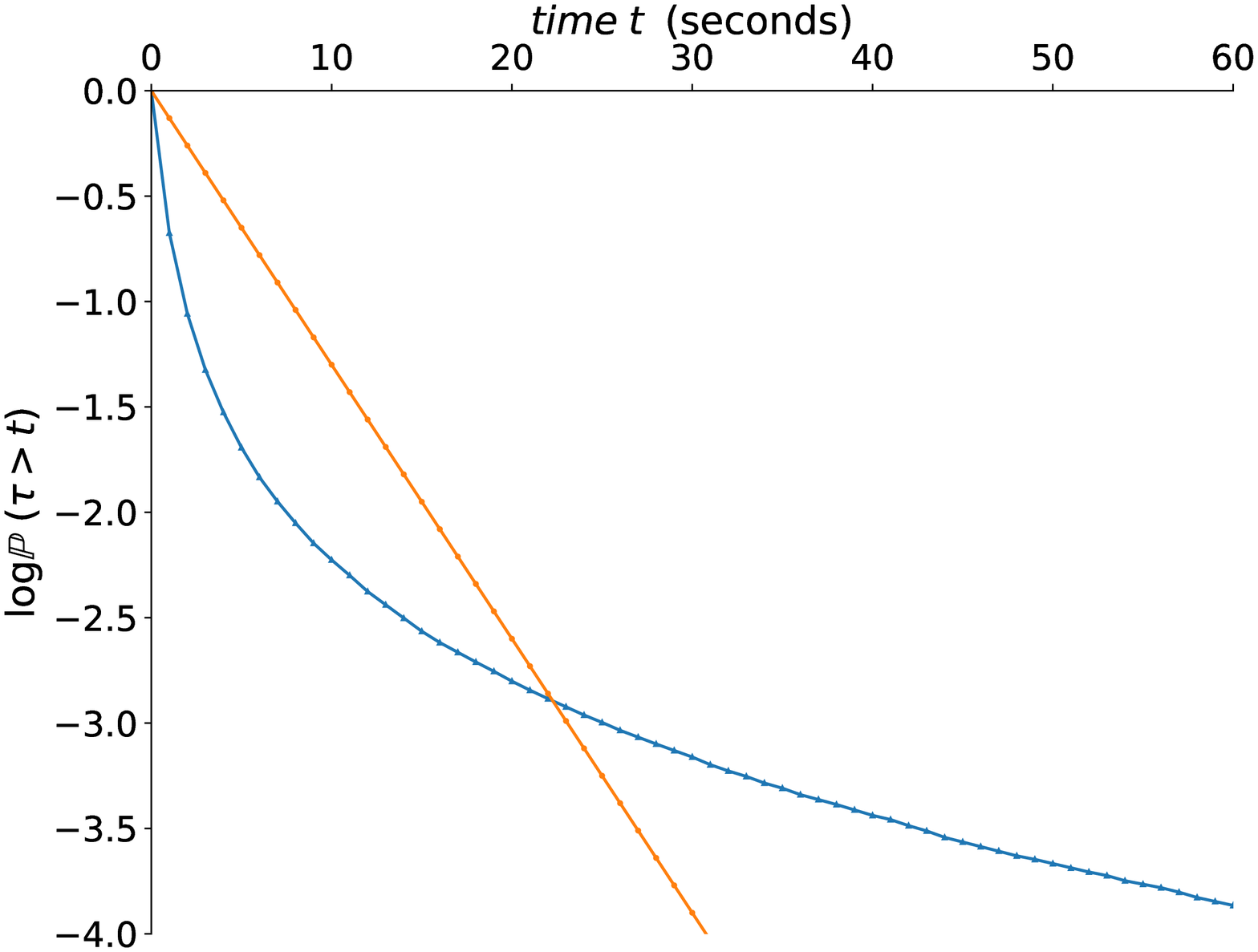}\label{fig:1e}}
		\subfloat[Oil futures HFT down-ticks]{\includegraphics[width= 0.33\textwidth]{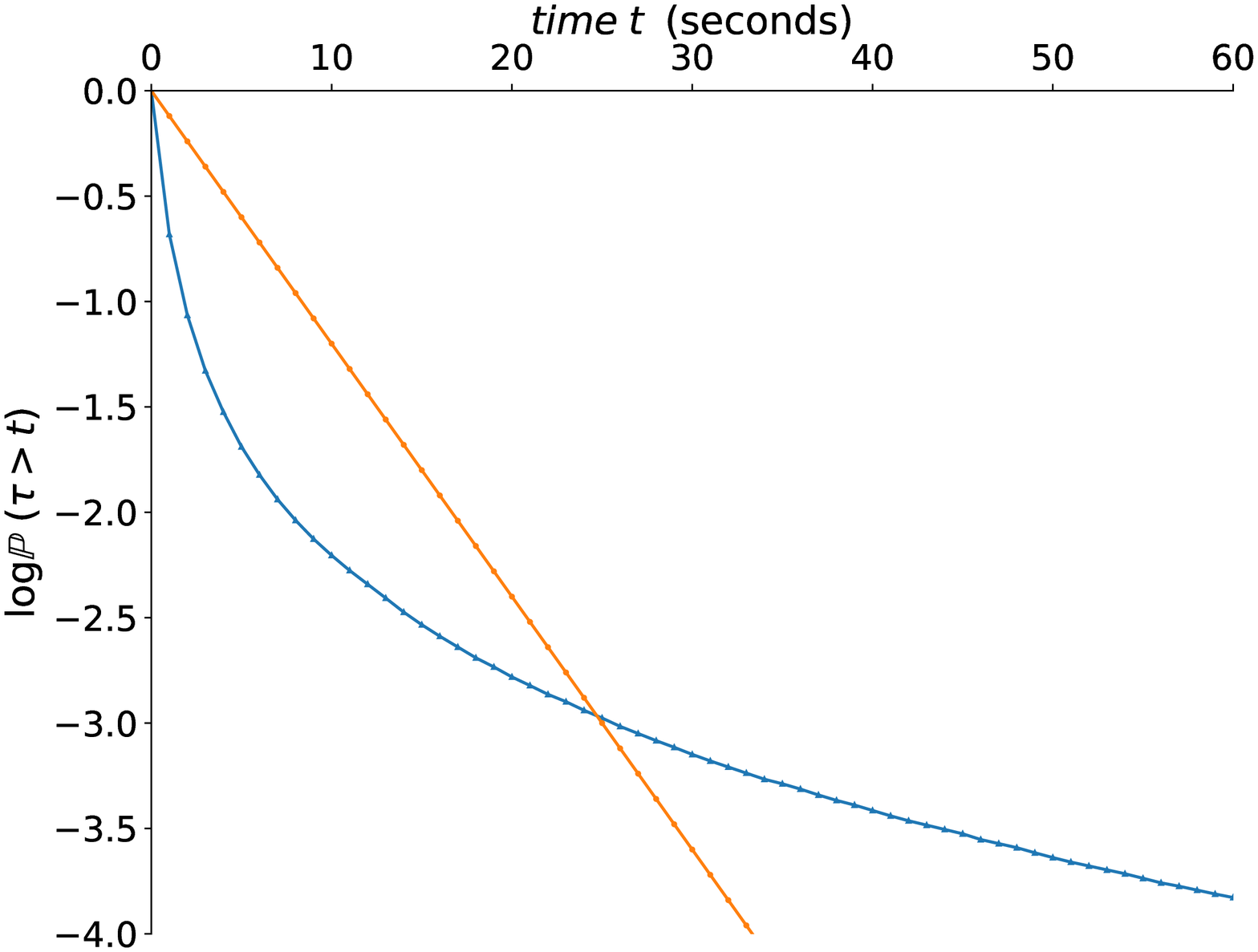}\label{fig:1c}}    
		\subfloat[Danish fire insurance data]{\includegraphics[width= 0.33\textwidth]{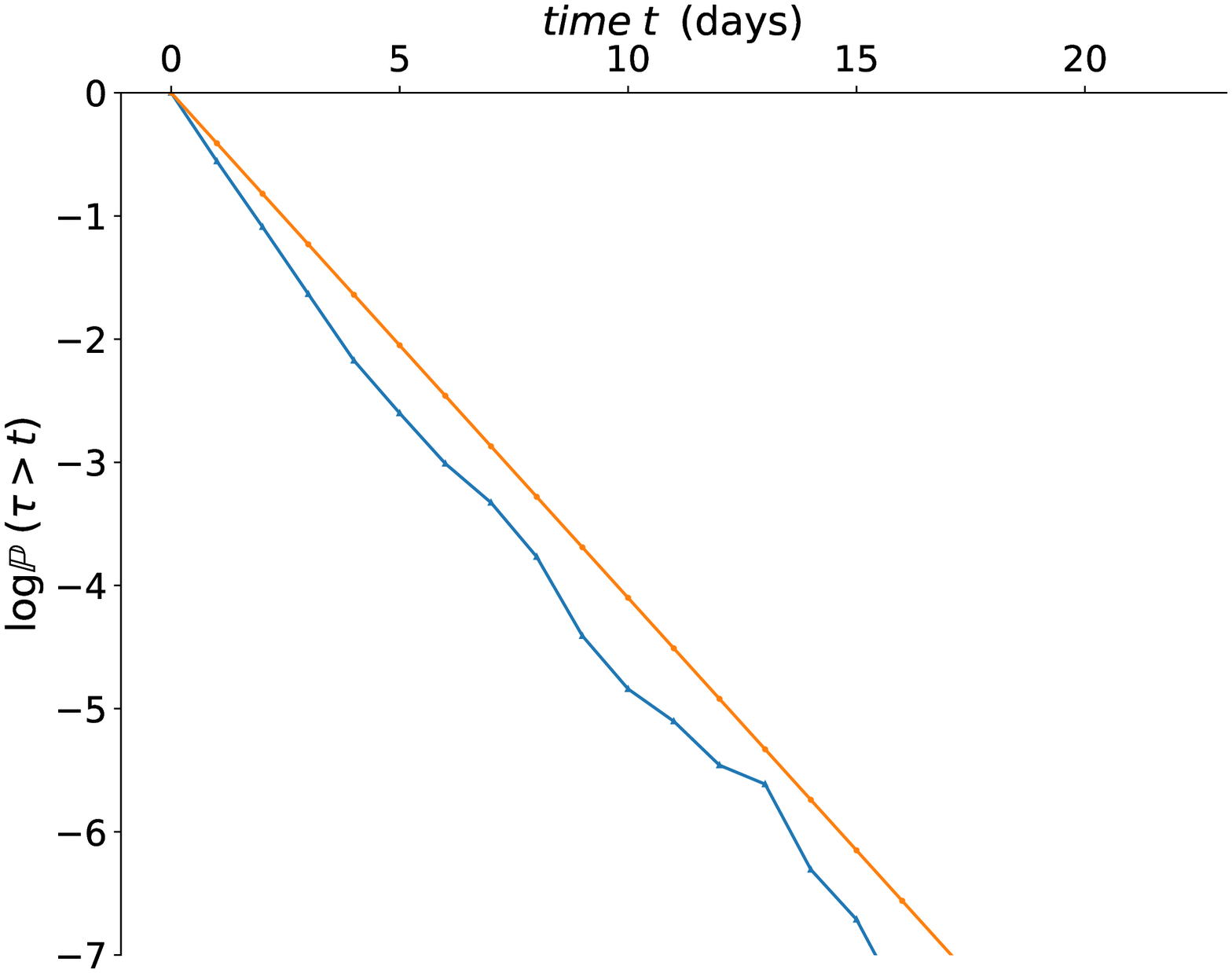}\label{fig:1a}}
		\caption{\label{fig:1}Survival functions for oil futures high frequency trading data and Danish fire insurance data, compared with the corresponding exponential distribution's survival functions}
	\end{center}
\end{figure}

This paper features 
the \emph{fractional} Poisson process. It is a parametrized stochastic counting process which includes the classical Poisson process as a special, perhaps extreme case, but with dependent arrival times. We will see that the fractional Poisson process correctly captures main characteristics of the data displayed in Figure~\ref{fig:1}.


Arrival times of claim events are of striking economic importance for insurance companies as well and our discussion puts a slight focus on this economic segment.
The Cram{\'e}r--Lundberg model, to give an example, genuinely builds on the Poisson process. We replace the Poisson process by its fractional extension and investigate the new economic characteristics. 
We observe that the fractional Poisson process exposes companies to higher initial stress. But conversely, we demonstrate that the ruin probability and the average capital required to recover a firm are identical for the classical and the fractional Poisson process. In special cases we can even provide explicit relations.

\paragraph{Outline of the paper.}
The following two Sections~\ref{sec:2} and~\ref{sec:3} are technical, they introduce and discuss the fractional Poisson process by considering L{\'e}vy subordinators.
Section~\ref{sec:4} introduces the risk process based on the fractional Poisson process, while the subsequent Section~\ref{sec:5} addresses insurance in a dependent framework. 
This section contains a discussion of average capital injection to be expected in case of ruin. We finally discuss the risk process for insurance companies and conclude in Section~\ref{sec:7}.

\section{Subordinators and inverse subordinators}
\label{sec:2}

This section considers the inverse subordinator first and introduces the Mittag-Leffler distribution. This distribution is essential in defining the fractional Poisson process. Relating the subordinators to $\alpha$-stable distributions enables to simulate trajectories of the fractional Poisson process. Some results here follow Bingham \cite{B}, Veillette and Taqqu \cite{VTM1}, \cite{VTM2}, \cite{MS1}.

\subsection{Inverse subordinator}

Consider a non-decreasing L\'{e}vy process $L=\{L(t),\ t\geq 0\},$ starting
from $0,$ which is continuous from the right with left limits (c{\`a}dl{\`a}%
g), continuous in probability, with independent and stationary increments.
Such process is known as L\'{e}vy subordinator with Laplace exponent 
\begin{equation}  \label{LS}
\phi (s)=\mu s+\int_{(0,\infty )}(1-e^{-sx})\Pi (dx),\ s\geq 0,
\end{equation}
where $\mu \geq 0$ is the drift and the L\'{e}vy measure $\Pi $ on $%
\mathbb{R}_{+}\cup \left\{ 0\right\} $ satisfies 
\begin{equation}
	\int\limits_{0}^{\infty }\min (1,x)\Pi (dx)<\infty .  \label{LS1}
\end{equation}
This means that 
\begin{equation}
\E e^{-s\,L(t)}=e^{-t\,\phi (s)},\ s\geq 0.  \label{LS2}
\end{equation}

The inverse subordinator $Y(t),\ t\geq 0,$ is the first-passage time of $L$, i.e.,
\begin{equation}  \label{LS3}
Y(t):=\inf \left\{ u\geq 0\colon L(u)>t\right\},\quad t\geq 0.
\end{equation}%
The process $Y(t),\ t\geq 0,$ is non-decreasing and its sample paths are almost surely (a.s.)\ continuous if $L$ is strictly increasing. Also $Y$ is, in general,
non-Markovian with non-stationary and non-independent increments.

We have 
\begin{equation}
\left\{ L(u_{i})<t_{i},\ i=1,\ldots ,n\right\} =\left\{Y(t_{i})>u_{i},\
i=1,\ldots ,n\right\}\label{IS1}
\end{equation}%
and for any $z>0$ 
\begin{equation*}
\mathrm{P}\left\{ Y(t)>x\right\} \leq \mathrm{P}\left\{ L(x)\leq t\right\} = 
\mathrm{P}\left\{ e^{-zL(x)}\geq e^{-tz}\right\} \leq \exp \{tz-x\phi (z)\},
\end{equation*}%
which implies that all moments are finite, i.e., $\E Y^{p}(t)<\infty$ for any $p>0$.

Let $U^{(p)}(t):=\E Y^{p}(t)$ and the renewal function 
\begin{equation}
U(t):=U^{(1)}(t)=\E Y(t),  \label{IS2}
\end{equation}
and let 
\begin{equation*}
	H_{u}(t):=\mathrm{P}\left\{ Y(u)<t\right\} .
\end{equation*}
Then for $p>-1$, the Laplace transform of $U^{(p)}(t)$ is given by 
\begin{equation*}
\tilde{U}^{(p)}(s)=\int_0^\infty U^{(p)}(t)e^{-st}dt=\Gamma (1+p)/[s\phi ^{p}(s)],
\end{equation*}
in particular, for $p=1$ 
\begin{equation}
\int_0^\infty e^{-st}dH_{u}(t)=e^{-t\phi
(s)},  \label{IS4}
\end{equation}
and 
\begin{equation}
\tilde{U}(s)=\int_0^\infty U(t)e^{-st}dt=%
\frac{1}{s\phi (s)};  \label{IS5}
\end{equation}
thus, $\tilde{U}$ characterizes the inverse process $Y$, since $\phi$
characterizes $L.$

Therefore, we get a covariance formula (cf.\ \cite{VTM1}, \cite{VTM2}) 
\begin{equation}  \label{IS6}
\mathrm{Cov}\big(Y(s),Y(t)\big)=\int_{0}^{\min(s,t)}\big(U(s-\tau )+U(t-\tau
)\big)dU(\tau )-U(s)U(t).
\end{equation}

The most important example is considered in the next section, but there are
some others.

\subsection{Inverse stable subordinator}

Let $L_{\alpha }=\{L_{\alpha }(t),t\geq 0\}$ be an ${\alpha}$\nobreakdash-stable
subordinator with Laplace exponent $\phi (s)=s^{{\alpha} },\ 0<{\alpha} <1$, whose density $%
g(t,x)$ is such that $L_{\alpha}(1)$ has pdf 
\begin{align}\label{eq:10}
g_{{\alpha} }(x)=g(1,x)&=\frac{1}{\pi }\mathop{\displaystyle \sum}
\limits_{k=1}^{\infty }(-1)^{k+1}\frac{\Gamma ({\alpha} k+1)}{k!}\frac{1}{
x^{{\alpha} k+1}}\sin (\pi k{\alpha} ).
\end{align}%
%
%
Then the inverse stable subordinator

\begin{equation*}
Y_{{\alpha} }(t)=\inf \{u\geq 0\colon L_{{\alpha} }(u)>t\}
\end{equation*}
has density 
\begin{equation}  \label{D1}
f_{{\alpha} }(t,x)=\frac{t}{{\alpha} }x^{-1-\frac{1}{{\alpha} }}g_{{\alpha}
}(tx^{- \frac{1}{{\alpha} }}),\ x\geq 0,\ t>0.
\end{equation}
Alternative form for the pdf $f_\alpha$ in term of $H$-function and infinite series are discussed in \cite{KKV, KV}.

Its paths are continuous and nondecreasing. For ${\alpha} =1/2,$ the inverse
stable subordinator is the running supremum process of Brownian motion, and
for ${\alpha} \in (0,1/2]$ this process is the local time at zero of a
strictly stable L\'{e}vy process of index ${\alpha} /(1-{\alpha} ).$

Let 
\begin{equation}
E_{{\alpha} }(z):=\sum_{k=0}^{\infty }\frac{z^{k}}{\Gamma ({\alpha} k+1)},\ {%
\alpha} >0,\ z\in \mathbb{C}  \label{ML1}
\end{equation}%
be the \emph{Mittag-Leffler function} (cf.\ \cite{HMS}) and recall the following:

\begin{enumerate}
\item The Laplace transform of the Mittag-Leffler function is of the form 
\begin{equation*}
\int_{0}^{\infty }e^{-st}E_{{\alpha} }(-t^{{\alpha} })dt=\frac{s^{{\alpha}
-1}}{ 1+s^{{\alpha} }},\ 0<{\alpha} <1,\ t\geq 0,\ s>0.
\end{equation*}

\item The Mittag-Leffler function is a solution of the fractional equation: 
\begin{equation*}
\mathrm{D}_{t}^{{\alpha} }E_{{\alpha} }(at^{{\alpha} })=aE_{{\alpha} }(at^{{%
\alpha} }),\ 0<{\alpha} <1,
\end{equation*}
where the fractional Caputo-Djrbashian derivative $\mathrm{D}_{t}^{{\alpha}
} $ is defined as (see~\cite{MS1}) 
\begin{equation}
\mathrm{D}_{t}^{{\alpha} }u(t)=\frac{1}{\Gamma (1-{\alpha} )}\int_{0}^{t}%
\frac{ du(\tau )}{d\tau }\frac{d\tau }{(t-\tau )^{{\alpha} }},\ 0<{\alpha}
<1.  \label{FD}
\end{equation}
\end{enumerate}

\begin{proposition}
The following hold true for the processes $Y_{\alpha}$ and $L_{\alpha}$:
\begin{enumerate}
\item The Laplace transform is
\begin{equation*}
\E e^{-sY_{{\alpha} }(t)}=\sum_{n=0}^{\infty }\frac{(-st^{{\alpha}
})^{n} }{\Gamma ({\alpha} n+1)}=E_{{\alpha} }(-st^{{\alpha} }),\; s>0
\end{equation*}
and it holds that
\begin{equation*}
\int_{0}^{\infty }e^{-st}f_{\alpha}(t,x)dt=s^{{\alpha} -1}e^{-xs^{{\alpha}
}}, \; s\geq 0.
\end{equation*}

\item Both processes $L_{{\alpha} }(t),t\geq 0$ and $Y_{{\alpha} }(t)$ are
self-similar, i.e.,
\begin{equation*}
\frac{L_{{\alpha} }(at)}{a^{1/{\alpha} }}\overset{d}{=}L_{{\alpha} }(t),\;%
\frac{ Y_{{\alpha} }(at)}{a^{{\alpha} }}\overset{d}{=}Y_{{\alpha} }(t),\; a>0.
\end{equation*}

\item
\begin{equation}  \label{eq:EY}
\E Y_{{\alpha} }(t)=\frac{t^{{\alpha} }}{\Gamma (1+{\alpha} )};\ 
\E [Y_{{\alpha} }(t)]^{\nu }=\frac{\Gamma (\nu +1)}{\Gamma ({\alpha}
\nu +1)} t^{{\alpha} \nu },\ \nu >0;
\end{equation}

\item 
\begin{align}
&\mathrm{Cov}\big(Y_{{\alpha} }(t),Y_{{\alpha} }(s)\big)  \label{COV4} \\
&=\frac{1}{\Gamma (1+{\alpha} )\Gamma ({\alpha} )}\int_{0}^{\min (t,s)}\big(%
(t-\tau )^{{\alpha} }+(s-\tau )^{{\alpha} }\big) \tau ^{{\alpha} -1}d\tau -%
\frac{ (st)^{{\alpha} }}{\Gamma ^{2}(1+{\alpha} )}.  \notag\\
&=\frac{1}{\Gamma (1+{\alpha} )\Gamma ({\alpha} )}[t^{2\alpha}B(s/t;\alpha,1+\alpha) + s^{2\alpha}B(\alpha, 1+\alpha)] -%
\frac{ (st)^{{\alpha} }}{\Gamma ^{2}(1+{\alpha} )},\; s<t,  \notag
\end{align}
where $B(a,b)$ ($B(z;a,b)$, resp.) is the Beta function (incomplete Beta function, resp.).

\item The following asymptotic expansions hold true:
\begin{enumerate}
\item For fixed $s$ and large $t$, it is not difficult to show that
\begin{align}\label{AsympCov}
\mathrm{Cov}\big(Y_{{\alpha} }(t),Y_{{\alpha} }(s)\big) = \frac{-\alpha s^{\alpha+1}t^{\alpha-1}}{\Gamma(\alpha)\Gamma(2+\alpha)}+\cdots + s^{2\alpha}B(1+\alpha, \alpha).
\end{align}
Further, 
\begin{align*}
\mathrm{Var}\big(Y_{{\alpha} }(t)\big) = t^{2\alpha}\left( \frac{2}{\Gamma(1+2\alpha)} - \frac{1}{\Gamma^2(1+\alpha)}\right) = d({\alpha})\,t^{2\alpha}\;\mathrm{(say)}.
\end{align*}
\item For fixed $s$, it follows
\begin{align}\label{AsympCorr}
\mathrm{Cor}\big(Y_{{\alpha} }(t),Y_{{\alpha} }(s)\big) \sim \frac{B(1+\alpha,\alpha)}{d(\alpha)}\frac{s^{\alpha}}{t^{\alpha}},\;\mathrm{as}\;t\rightarrow \infty.
\end{align}
\end{enumerate}
\end{enumerate}
\end{proposition}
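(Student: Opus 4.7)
The proof splits into five parts which I would carry out in the stated order, since each builds on the previous.

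For (i), the plan is to combine the hitting-time identity $\{Y_\alpha(t)>x\} = \{L_\alpha(x)\le t\}$ with the known $\E e^{-sL_\alpha(x)} = e^{-xs^\alpha}$. A Fubini computation on $\int_0^\infty e^{-st}\mathrm{P}(L_\alpha(x)\le t)\,dt$ gives $\tfrac{1}{s}e^{-xs^\alpha}$, and differentiating in $x$ produces the second displayed identity. Integrating that against $e^{-ux}\,dx$ yields the double Laplace transform $\lambda^{\alpha-1}/(u+\lambda^\alpha)$, which matches term by term the Laplace transform of the Mittag-Leffler series $E_\alpha(-ut^\alpha)$; uniqueness of Laplace transforms then finishes (i). Part (ii) for $L_\alpha$ is immediate from comparing Laplace transforms, and for $Y_\alpha$ it follows by applying $\{Y_\alpha(at)\le a^\alpha y\} = \{L_\alpha(a^\alpha y)\ge at\}$ and invoking the just-proved self-similarity of $L_\alpha$ to replace $L_\alpha(a^\alpha y)$ by $aL_\alpha(y)$ in distribution.

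For (iii), I would compare the series for $E_\alpha(-st^\alpha)$ with $\sum_n(-s)^n\E Y_\alpha(t)^n/n!$ to read off the integer moments. For general $\nu>0$, the change of variable $u=tx^{-1/\alpha}$ in the density (D1) reduces $\E Y_\alpha(t)^\nu$ to $t^{\alpha\nu}\E L_\alpha(1)^{-\alpha\nu}$, and the negative fractional moments of $L_\alpha(1)$ follow from $\E L_\alpha(1)^{-\beta}=\frac{1}{\Gamma(\beta)}\int_0^\infty s^{\beta-1}e^{-s^\alpha}\,ds=\Gamma(\beta/\alpha)/[\alpha\Gamma(\beta)]$ via $u=s^\alpha$, which with $\beta=\alpha\nu$ and $\alpha\nu\Gamma(\alpha\nu)=\Gamma(\alpha\nu+1)$ gives the stated formula. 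Part (iv) is then a direct substitution of $U(\tau)=\tau^\alpha/\Gamma(1+\alpha)$ (so $dU(\tau)=\tau^{\alpha-1}d\tau/\Gamma(\alpha)$) into (IS6), yielding the first form of (COV4); for the second form with $s<t$, the substitutions $\tau=su$ and $\tau=tu$ recognize the two integrals as $s^{2\alpha}B(\alpha,1+\alpha)$ and $t^{2\alpha}B(s/t;\alpha,1+\alpha)$ respectively.

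The main obstacle is (v)(a). I would expand $(1-u)^\alpha=1-\alpha u+O(u^2)$ inside the incomplete Beta integral $B(s/t;\alpha,1+\alpha)=\int_0^{s/t}u^{\alpha-1}(1-u)^\alpha\,du$ and integrate termwise, then multiply by $t^{2\alpha}/[\Gamma(1+\alpha)\Gamma(\alpha)]$. The delicate point is a cancellation: using $\Gamma(1+\alpha)\Gamma(\alpha)=\Gamma(1+\alpha)^2/\alpha$, the leading contribution turns into $(st)^\alpha/\Gamma^2(1+\alpha)$, which exactly annihilates the subtracted term in (COV4). The true leading large-$t$ behaviour is therefore the next-order piece $-\alpha s^{\alpha+1}t^{\alpha-1}/[\Gamma(\alpha)\Gamma(2+\alpha)]$, together with the surviving $t$-independent constant $s^{2\alpha}B(1+\alpha,\alpha)/[\Gamma(1+\alpha)\Gamma(\alpha)]$. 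The variance formula comes by setting $s=t$ in (COV4), noting $B(1;\alpha,1+\alpha)=B(\alpha,1+\alpha)$. Finally, for (v)(b), dividing the covariance asymptotic by $\sigma_s\sigma_t=d(\alpha)\,s^\alpha t^\alpha$ leaves only the constant term at leading order in $(s/t)^\alpha$, producing the stated correlation.
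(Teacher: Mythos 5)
Your proposal is correct, and it is worth noting that the paper itself offers no proof of this proposition at all: it is stated as a collection of known facts, with the section's opening deferring to Bingham, Veillette--Taqqu and Meerschaert--Sikorskii. Your argument is therefore a self-contained reconstruction rather than a rival to an in-text proof. The route you take is the standard one and each step checks out: the hitting-time duality plus Fubini gives the double Laplace transform $s^{\alpha-1}/(u+s^{\alpha})$, which is indeed the transform of $E_{\alpha}(-ut^{\alpha})$; self-similarity of $Y_{\alpha}$ follows from that of $L_{\alpha}$ exactly as you say; the representation $Y_{\alpha}(t)\overset{d}{=}(t/L_{\alpha}(1))^{\alpha}$ together with $\E L_{\alpha}(1)^{-\beta}=\Gamma(\beta/\alpha)/[\alpha\Gamma(\beta)]$ yields the fractional moments; and substituting $dU(\tau)=\tau^{\alpha-1}d\tau/\Gamma(\alpha)$ into the covariance formula \eqref{IS6} gives \eqref{COV4}, with the Beta-function form following from the substitutions $\tau=tu$ and $\tau=su$. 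Your identification of the cancellation of the $(st)^{\alpha}/\Gamma^{2}(1+\alpha)$ term against the leading term of the incomplete Beta expansion is precisely the delicate point of part (v), and you handle it correctly.

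One substantive remark: your bookkeeping in (v)(a) gives the surviving constant as $s^{2\alpha}B(1+\alpha,\alpha)/[\Gamma(1+\alpha)\Gamma(\alpha)]=s^{2\alpha}/\Gamma(1+2\alpha)$, whereas the displayed formulas \eqref{AsympCov} and \eqref{AsympCorr} carry the constant $s^{2\alpha}B(1+\alpha,\alpha)$ without that normalising factor. Your version is the one consistent with \eqref{COV4} (note that the $t^{\alpha-1}$ term in \eqref{AsympCov} does retain the factor, absorbed into $\Gamma(\alpha)\Gamma(2+\alpha)$), so the paper's constant appears to have dropped a factor of $1/[\Gamma(1+\alpha)\Gamma(\alpha)]$; this does not affect the qualitative conclusion that $\mathrm{Cor}(Y_{\alpha}(s),Y_{\alpha}(t))\sim c(s)t^{-\alpha}$, hence the LRD property stands either way. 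A final minor point: the termwise comparison $\E e^{-sY_{\alpha}(t)}=\sum_n(-s)^n\E Y_{\alpha}(t)^n/n!$ deserves one line of justification (the moments $n!\,t^{\alpha n}/\Gamma(\alpha n+1)$ grow slowly enough that the exponential series converges absolutely for all $s$), but this is routine.
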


\bigskip
A finite variance stationary process $X(t)$ is said to have \emph{long-range-de\-pen\-dence} (LRD) property if $\sum_{k=1}^{\infty}\mathrm{Cov}\big(X(t), X(t+k)\big) = \infty$. Further, for a non-stationary process $Y(t)$ an equivalent definition is given as follows.
\begin{definition}[LRD for non-stationary process]\label{def:LRD}
Let $s>0$ be fixed and $t > s$. Then the process $Y(t)$ is said to have LRD property if
\begin{equation}\label{LRD}
\mathrm{Cor}(Y(s), Y(t)) \sim c(s)t^{-d},\; \mathrm{as}\; t\rightarrow \infty,
\end{equation}
where the constant $c(s)$ is depending on $s$ and $0<d<1$.

\end{definition}
Thus this last property~\eqref{AsympCorr} can be interpreted as long-range dependence of $Y_\alpha$ in view of~\eqref{LRD}.

\begin{remark}
	There is a (complicated) form of all finite-dimensional distributions of $Y_{\alpha}(t),\ t\geq 0,$ in the form of Laplace transforms, see~\cite{B}.
\end{remark}

\subsection{Simulation of the stable subordinator}

In order to obtain trajectories for the stable subordinator it is necessary to simulate random variables with finite Laplace transform satisfying~\eqref{eq:10}. To this end it is necessary that the ${\alpha}$-stable random
variable $L_{\alpha}(t)$ is spectrally positive, which means $\beta=1$ in the standard parametrization (also type~1 parametrization, cf.\ \cite[Definition~1.1.6 and page~6]{SamorodnitskyTaqqu}) of the ${\alpha}$-stable
random variable 
\begin{equation}
L_{\alpha}(t)\sim S({\alpha},\beta,\gamma,\delta; 1)  \label{eq:stable}
\end{equation}
with characteristic function (Fourier-transform) 
\begin{equation*}
\E e^{i s L_{\alpha}(t)}=\exp\left(-\gamma^{\alpha}|s|^{\alpha}\big(%
1-i \beta\tan\frac{\pi{\alpha}}{2}\sign(s)\big)+i\delta s\right). \qquad{%
\alpha}\not=1.
\end{equation*}
This expression is also obtained by substituting $-i s$ in the Laplace
transform 
\begin{equation*}
\E e^{-s L_{\alpha}(t)}=\exp\left(-\frac{\gamma^{\alpha}}{\cos\frac{%
\pi{\alpha}}{2}}s^{\alpha}-\delta s\right).
\end{equation*}
Comparing the latter with~\eqref{eq:10} reveals that the parameters of the ${%
\alpha}$-stable random variable $L_{\alpha}(t)$, ${\alpha}\in(0,1)$, in the
parametrization~\eqref{eq:stable} are 
\begin{equation*}
\beta=1,\ \gamma^{\alpha}=t\cdot\cos\frac{\pi{\alpha}}{2} \text{ and }
\delta=0.
\end{equation*}

\begin{figure}[h!]
\centering
\subfloat[Stable subordinator $L_\alpha(t)$]{\includegraphics[width= 0.5\textwidth, height = 0.4\textwidth]{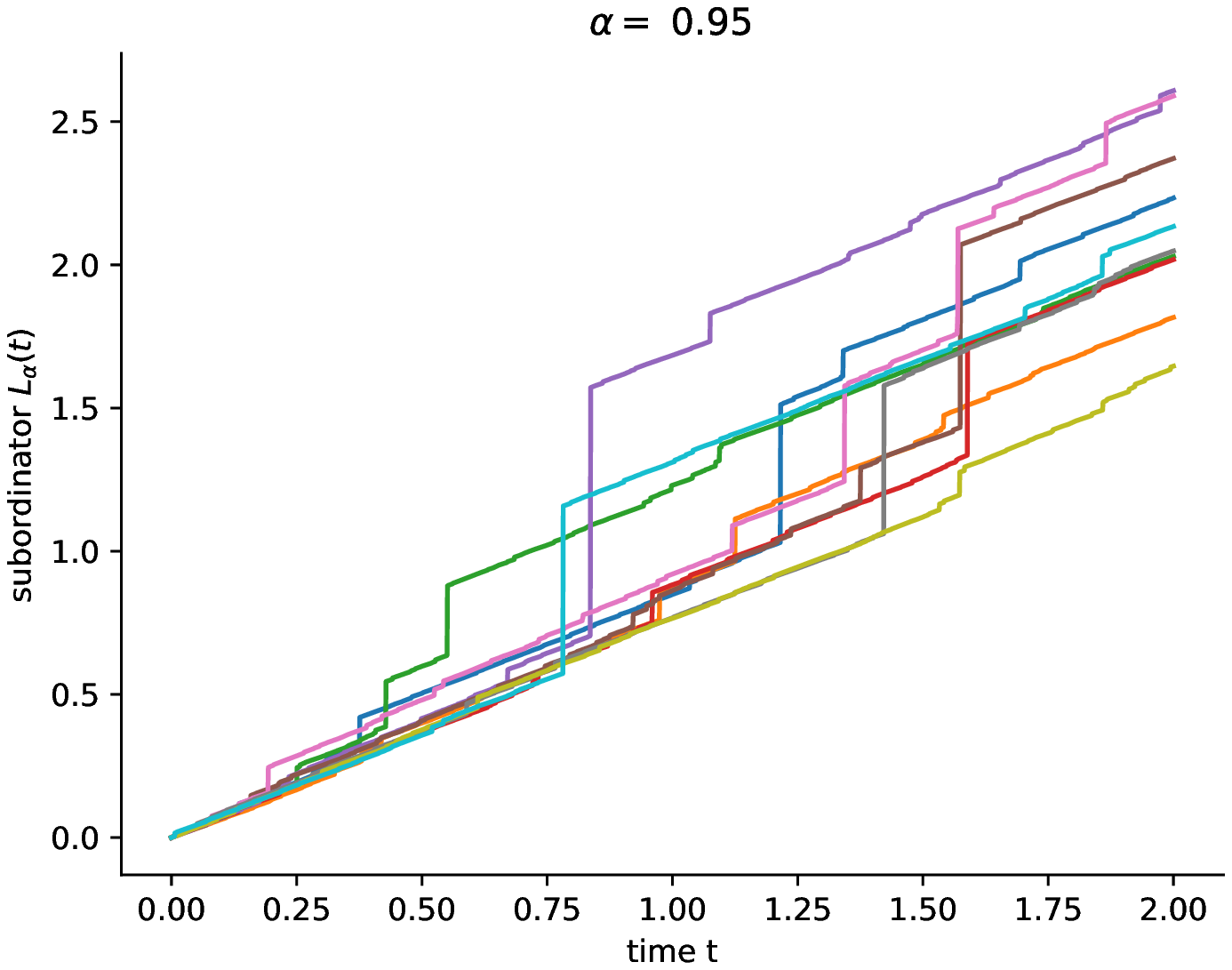}}
\subfloat[Inverse stable subordinator $Y_\alpha(t)$]{\includegraphics[width= 0.5\textwidth, height = 0.4\textwidth]{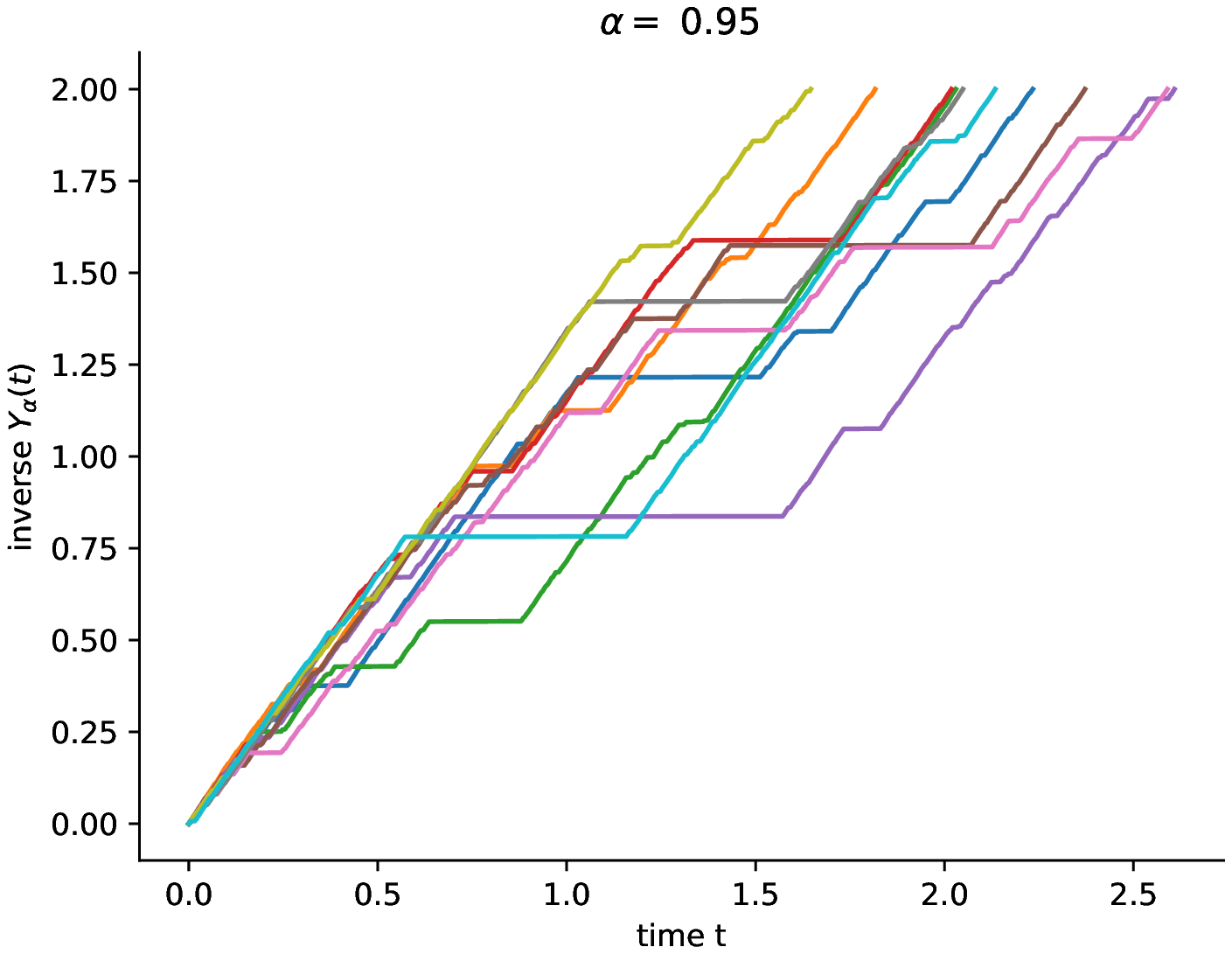}}   
\caption{Sample trajectories of stable and inverse stable subordinators with $\alpha=95\%$.}
\label{fig:1b}
\end{figure}



\section{Classical fractional Poisson processes}
\label{sec:3}

\bigskip The first definition of the fractional Poisson process (FPP) $N_{{\alpha} }=\{N_{{\alpha} }(t),\
t\ge 0\}$ is given by Mainardi et al.~\cite{MGS} (see also \cite{MGV1}) as a
renewal process with Mittag-Leffler waiting times between the events 
\begin{align}
N_{{\alpha} }(t)&=\max \left\{ n\colon T_{1}+\dots+T_{n}\leq t\right\}
=\sum_{j=1}^{\infty }\mathrm{I}\left\{ T_{1}+\dots+T_{j}\leq t\right\} \nonumber\\
&=\sum_{j=1}^{\infty }\mathrm{I}\left\{ U_{j}\leq G_{{\alpha} }(t)\right\}
,\quad t\geq 0,  \label{DF1}
\end{align}
where $\left\{ T_{j}\right\}$, $j=1,2,\dots$, are iid random variables with the
strictly monotone Mittag-Leffler distribution function 
\begin{equation}\label{eq:MittagLeffler}
	F_{{\alpha} }(t)=\mathrm{P}\left( T_{j}\leq t\right) =1-E_\alpha(-\lambda t^\alpha),\ t\geq 0,\ 0<{\alpha} <1,\; j=1,2,\ldots,
\end{equation}
and 
\begin{equation*}
G_{{\alpha} }(t)=\mathrm{P}\left( T_{1}+\dots+T_{k}\leq t\right)
=\int_{0}^{t}h^{(k)}(x)dx.
\end{equation*}
\begin{figure}[ht!]
	\begin{center}
		\includegraphics[width= 0.50\textwidth]{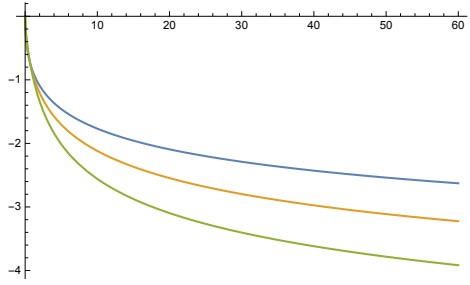}
		\caption{\label{fig:MittagLeffler}The survival function~\eqref{eq:MittagLeffler} for the fractional Poisson process for $\alpha=50\%$, $60\%$ and $70\%$. Compare with real data in Figure~\ref{fig:1}.}
	\end{center}
\end{figure}
Here we denote an indicator as $\mathrm{I}\left\{ \cdot \right\} $ and $%
U_j$, $j=1,2,\dots$, are iid uniformly distributed on $[0,1]$ random
variables. $h^{(j)}(\cdot)$ is the pdf of $j$th convolution of the
Mittag-Leffler distributions which is known as the generalized Erlang
distribution and it is of the form 
\begin{align*}
h^{(k)}(x)&={\alpha} \lambda ^{k}\frac{x^{k{\alpha} -1}}{(k-1)!}E_{{\alpha}
}^{(k)}(-\lambda x^{{\alpha} }) \\
&=\lambda ^{k}x^{{\alpha} k-1}E_{{\alpha} ,{\alpha} k+1}^{k}(-\lambda x^{{%
\alpha} }),\quad {\alpha} \in (0,1),\ x>0,
\end{align*}
where the three-parametric Generalized Mittag-Leffler function is defined as (cf.~\eqref{ML1} and~\cite{HMS}) 
\begin{equation}
E_{{\alpha} ,\beta }^{\gamma }(z)=\sum_{j=0}^{\infty }\frac{(\gamma
)_{j}z^{j} }{j!\,\Gamma ({\alpha} j+\beta )},\quad {\alpha} >0,\ \beta >0,\
\gamma >0,\ z\in \mathbb{C},  \label{ML3}
\end{equation}
where $(\gamma)_j:= \gamma(\gamma+1)\dots(\gamma+j-1)=j!\, \binom{\gamma+j-1}{j%
}=\frac{\Gamma(\gamma+j)}{\Gamma(\gamma)}$ is the rising factorial
(sometimes also called Pochhammer function).

Note that $S(t)\stackrel{d}=(t/L_{{\alpha} }(1))^{{\alpha} }$ and 
\begin{equation*}
\mathrm{P}\left( T_{j}>t\right) =\E e^{-\lambda S(t)},\quad t\geq 0.
\end{equation*}
Meershaert et al.\ \cite{MNV} find the stochastic representation
for FPP
\begin{equation*}
N_{\alpha} (t)=N\big(Y_{{\alpha} }(t)\big),\quad t\geq 0,\ {\alpha} \in
(0,1),
\end{equation*}
where $N=\{N(t),\ t\geq 0\},$ is the classical homogeneous Poisson process
with parameter $\lambda >0,$ which is independent of the inverse stable
subordinator $Y_{{\alpha} }.$

\bigskip One can compute the following expression for the one-dimensional
distribution of FPP: 
\begin{align}  \label{eq:26}
\mathrm{P}\big( N_{{\alpha} }(t)=k\big)&=p_{k}^{({\alpha}%
)}(t)=\int_{0}^{\infty }\frac{e^{-\lambda x}(\lambda t)^{k}}{k!}f_{{\alpha}
}(t,x)dx  \notag \\
&=\frac{t\lambda ^{k}}{{\alpha} k!}\int_{0}^{\infty }e^{-\lambda x}x^{k-1-%
\frac{1}{{\alpha} }}g_{{\alpha} }(tx^{-\frac{1}{{\alpha} }})dx  \notag \\
& =\frac{(\lambda t^{{\alpha} })^{k}}{k!}\mathop{\displaystyle \sum}
\limits_{j=0}^{\infty }\frac{(k+j)!}{j!}\frac{(-\lambda t^{{%
\alpha} })^{j}}{ \Gamma ({\alpha} (j+k)+1)}=\frac{\left( \lambda t^{{\alpha}
}\right) ^{k}}{k!} E_{{\alpha} }^{(k)}(-\lambda t^{{\alpha} }) \\
&=(\lambda t^{{\alpha} })^{k}E_{{\alpha} ,{\alpha} k+1}^{k+1}(-\lambda t^{{%
\alpha} }),\quad k=0,1,2\dots,t\geq 0,\ 0<{\alpha} <1,  \notag
\end{align}%
where $E_{{\alpha} }(z)$ is the Mittag-Leffler function~\eqref{ML1}
evaluated at $z=-\lambda t^{{\alpha} }$, and $E_{{\alpha} }^{(k)}(z)$ is the 
$k$th derivative of $E_{{\alpha} }(z)$ evaluated at $z=-\lambda t^{{\alpha}
}.$ Further, $E_{{\alpha} ,\beta }^{\gamma }(z)$ is the Generalized Mittag-Leffler
function~\eqref{ML3} evaluated at $z=-\lambda t^{{\alpha} }.$

Finally, Beghin and Orsingher (cf.\ \cite{B01}, \cite{BO2}) show that the
marginal distribution of FPP satisfies the system of fractional
differential-difference equations%
\begin{equation*}
\mathrm{D}_{t}^{{\alpha} }p_{k}^{({\alpha} )}(t)=-\lambda (p_{k}^{({\alpha}%
)}(t)-p_{k-1}^{({\alpha} )}(t)),\quad k=0,1,2,\dots
\end{equation*}%
with initial conditions $p_{0}^{({\alpha} )}(0)= 1$, $p_{k}^{({\alpha}%
)}(0)= 0$, $k\geq 1$, where $\mathrm{D}_{t}^{{\alpha} }$ is the fractional
Caputo-Djrbashian derivative~\eqref{FD}.

\begin{remark}[Expectation and variance]
Note that 
\begin{equation}  \label{EXP}
\E N_{{\alpha} }(t)=\int_{0}^{\infty }[\E N(u)]f_{{\alpha}%
}(t,u)du=\frac{\lambda t^{\alpha}}{\Gamma (1+{\alpha})},
\end{equation}%
where $f_{{\alpha} }(t,u)$ is given by~\eqref{D1}; Leonenko et al.~\cite%
{LMS}) show that 
\begin{align}
\mathrm{Cov}(N_{{\alpha} }(t),N_{{\alpha} }(s))& =\int_{0}^{\infty
}\int_{0}^{\infty }[\mathrm{Var}N(1)]\min (u,v)H_{t,s}(du,dv)  \label{COV} \\
& \quad +(\E N(1))^{2}\mathrm{Cov}(Y_{{\alpha} }(t),Y_{{\alpha} }(s))
\notag \\
& =\frac{\lambda (\min (t,s))^{{\alpha} }}{\Gamma (1+{\alpha} )}+\lambda
^{2} \mathrm{Cov}(Y_{{\alpha} }(t),Y_{{\alpha} }(s)),  \notag
\end{align}%
where $\mathrm{Cov}(Y_{{\alpha} }(t),Y_{{\alpha} }(s))$ is given in~\eqref{COV4}, and $H_{t,s}(u,v)=\mathrm{P}\big(Y_{{\alpha} }(t)<u,Y_{{\alpha}
}(s)<v\big).$ In particular 
\begin{align*}
\mathrm{Var}N_{{\alpha} }(t) &=\lambda ^{2}t^{2{\alpha} }\left(\frac{2}{%
\Gamma (1+2{\alpha} )}-\frac{1}{\Gamma ^{2}(1+{\alpha} )}\right)+\frac{%
\lambda t^{{\alpha} }}{ \Gamma (1+{\alpha} )} \\
&=\frac{\lambda ^{2}t^{2{\alpha} }}{\Gamma ^{2}(1+{\alpha} )}\left(\frac{{%
\alpha} \Gamma ({\alpha} )}{\Gamma (2{\alpha} )}-1\right)+\frac{\lambda t^{{%
\alpha} }}{\Gamma (1+{\alpha} )},\quad t\geq 0.
\end{align*}%
For $d=1$ the definition of Hurst index for renewal processes is discussed
in~\cite{DAL}. For the FPP it is equal 
\begin{equation*}
H=\inf \left\{ \beta \colon\lim \sup_{t\rightarrow \infty }\frac{\mathrm{Var}N_{{%
\alpha} }(t)}{t^{\beta }}<\infty \right\} ,
\end{equation*}%
thus $H=2{\alpha} \in (0,2).$
\end{remark}

Finally, Leonenko et al.~\cite{LST, LST1} introduced a fractional non-homogeneous
Poisson process (FNPP) with an intecity function $\lambda (t)\geq 0$ as 
\begin{align}
NN_{\alpha}(t)&:=N\Big(\Lambda\big(Y_{{\alpha} }(t)\big)\Big),\quad t\geq
0,\ {\alpha} \in (0,1)\text{ and}  \notag \\
\Lambda(t)&:=\int_0^t\lambda (s)ds,
\label{eq:lambda}
\end{align}%
where $N=\{N(t),t\geq 0\}$ is the classical homogeneous Poisson process with
parameter $1$, which is independent of the inverse stable subordinator $Y_{{%
\alpha} }.$ 
 Note that
\[
\mathbb{P}(NN_{\alpha}(t) = k) = \int_{0}^{\infty} e^{-\Lambda(u)} \frac{\Lambda(u)^k}{k!}f_{\alpha}(t,u)du,\;k=0,1,2\ldots,
\]
where $f_{\alpha}$ is given by \eqref{D1}. Alternatively (cf.~\cite{LST1}),
\[
NN_{\alpha}(t) = \sum_{n=0}^{\infty}n\, \mathds{I}{\{L_{\alpha}(\zeta_n) \leq t < L_{\alpha}(\zeta_{n+1})\}},
\]
where $\xi_1, \xi_2, \ldots, \xi_n$ is a sequence of non-negative iid random variables such that $\mathbb{P}(\xi_1 \leq x) = 1-e^{-\Lambda(x)},\;x\geq 0;$ $\zeta_n' = \max\{\xi_1,\xi_2,\ldots,\xi_n\}$ and $\zeta_n = \zeta_{\chi_n}'$, where $\chi_n = \inf\{k\colon \zeta_k' > \zeta_{\chi_{n-1}}'\},\;n=2,3,\ldots,$ with $\chi_1=1.$ The resulting sequence $\zeta_1, \zeta_2, \ldots, \zeta_n$ is strictly increasing, since it is obtained from the non-decreasing sequence $\zeta_1', \zeta_2',\ldots,\zeta_n'$ by omitting all repeating elements.

%

\section{Fractional risk processes}\label{sec:4}
The risk process is of fundamental importance in insurance. Based on the inverse subordinator process $Y(t)$ (cf.~\eqref{LS3}) we extend the classical risk process (also known as surplus process) and consider
\begin{equation}\label{eq:17}
	R_{\alpha}(t):=u+\mu\,\lambda\,(1+\rho)\,Y_{{\alpha }}(t)-\sum_{i=1}^{N_{{%
	\alpha }}(t)}X_{i},\quad t\geq 0.
\end{equation}%
Here, $u>0$ is the initial capital relative to the number of claims per time
unit (the Poisson parameter $\lambda $) and the iid variables $\{X_{i}\}$, $i\geq 1$ with mean $\mu >0$ model claim sizes. $N_{{\alpha }}(t)$, $t\geq 0,$
is an independent of $\{X_{i}\}$, $i\geq 1$, fractional Poisson process. The parameter $\rho \geq 0$ is the safety loading; we demonstrate in Proposition~\ref{prop:Martingale} below that the risk process~\eqref{eq:17} satisfies the \emph{net profit condition} iff $\rho >0$ (cf.\ Mikosch~\cite[Section~4.1]{Mikosch2009}),
which is economically that the company will not necessarily go bankrupt iff $\rho>0$.

\begin{remark}\label{rem:Risk}
	Note that $Y_1(t)=t$ for $\alpha=1$ so that the model~\eqref{eq:17} extends the classical ruin process considered in risk theory.
\end{remark}
	
It is essential in~\eqref{eq:17} to observe that the counting process $N_\alpha(t)$ and the payment process $Y_\alpha(t)$ follow the same time scale. These coordinated, or harmonized clocks are essential, as otherwise the model would over-predict too many claims (too many premiums, respectively). As well, different clocks for these two processes violated the profit condition (cf.\ Proposition~\ref{prop:Martingale} below).

The process for the non-homogeneous analogue of~\eqref{eq:17} is 
\begin{equation*}
	R_{\alpha }(t)=u+\mu (1+\rho )\Lambda \big(Y_{{\alpha }}(t)\big)%
	-\sum_{i=1}^{NN_{{\alpha }}(t)}X_{i},\ t\geq 0.
\end{equation*}

\begin{remark}[Consistency with equivalence principle]
	A main motivation for considering the risk process~\eqref{eq:17} comes from the fact that the stochastic processes 
	\begin{equation*}
		\sum_{i=1}^{N_{{\alpha} }(t)}X_i-\mu\,\lambda\,Y_{{\alpha} }(t),\quad t\geq 0
	\end{equation*}%
	and its non-homogeneous analogon
	\begin{equation*}
		\sum_{i=1}^{NN_\alpha(t)}X_i-\mu\,\Lambda\big(Y_{{\alpha}}(t)\big),\quad t\geq
	0
	\end{equation*}%
	are martingales with respect to natural filtrations (cf.\ Proposition~\ref{prop:Martingale} below).
	
	It follows from this observation that the time change imposed by $Y_{\alpha}$
	does not affect or violate the \emph{net} or \emph{equivalence principle}.
	 Further, the part of the risk process corresponding to the premium (i.e., $\mu\,\lambda\,	Y_\alpha(t)$ without $\rho$, or by setting $\rho=0$) is the fair premium
	of the remaining claims process even under the fractional Poisson process.
\end{remark}

\begin{proposition}\label{prop:Martingale}
The fractional risk process $R_{\alpha}(t)$ introduced in~\eqref{eq:17} is a submartingale (martingale, supermartingale, resp.) for $\rho>0$ ($\rho=0$, $\rho<0$, resp.) with respect to the natural filtration.
\end{proposition}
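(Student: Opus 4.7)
The plan is to compute $\E\!\bigl[R_\alpha(t)-R_\alpha(s)\mid\mathcal{F}_s\bigr]$ for $s<t$ and show its sign matches $\sign(\rho)$. I would work with the natural filtration $\mathcal{F}_t$ generated jointly by $Y_\alpha$, the underlying homogeneous Poisson process $N$, and the claim sequence $(X_i)$ up to time $t$, so that $N_\alpha(t)=N(Y_\alpha(t))$ is adapted. Integrability is immediate because $\E Y_\alpha(t)=t^\alpha/\Gamma(1+\alpha)<\infty$ by~\eqref{eq:EY}, hence $\E N_\alpha(t)=\lambda\,\E Y_\alpha(t)<\infty$ by~\eqref{EXP}, and by Wald $\E\bigl|\sum_{i=1}^{N_\alpha(t)}X_i\bigr|\le\E|X_1|\,\E N_\alpha(t)<\infty$.

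The increment splits into a premium part and a claims part:
\begin{equation*}
R_\alpha(t)-R_\alpha(s)=\mu\lambda(1+\rho)\bigl(Y_\alpha(t)-Y_\alpha(s)\bigr)-\sum_{i=N_\alpha(s)+1}^{N_\alpha(t)}X_i.
\end{equation*}
For the claims part I would condition first on $\mathcal{F}_s\vee\sigma(N_\alpha(t))$; because the $X_i$ are iid and independent of the counting process, a standard Wald argument yields
\begin{equation*}
\E\!\Bigl[\sum_{i=N_\alpha(s)+1}^{N_\alpha(t)}X_i\,\Big|\,\mathcal{F}_s\Bigr]=\mu\,\E\!\bigl[N_\alpha(t)-N_\alpha(s)\mid\mathcal{F}_s\bigr].
\end{equation*}
Next, using the representation $N_\alpha=N\circ Y_\alpha$ with $N$ independent of $Y_\alpha$ and enlarging the conditioning by the future of $Y_\alpha$, the increment $N(Y_\alpha(t))-N(Y_\alpha(s))$ conditional on $Y_\alpha$ is Poisson with mean $\lambda(Y_\alpha(t)-Y_\alpha(s))$, so
\begin{equation*}
\E\!\bigl[N_\alpha(t)-N_\alpha(s)\mid\mathcal{F}_s\bigr]=\lambda\,\E\!\bigl[Y_\alpha(t)-Y_\alpha(s)\mid\mathcal{F}_s\bigr].
\end{equation*}

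Substituting back gives
\begin{equation*}
\E\!\bigl[R_\alpha(t)-R_\alpha(s)\mid\mathcal{F}_s\bigr]=\mu\lambda\rho\,\E\!\bigl[Y_\alpha(t)-Y_\alpha(s)\mid\mathcal{F}_s\bigr],
\end{equation*}
and since $Y_\alpha$ is non-decreasing the conditional expectation on the right is almost surely non-negative, so the sign of the whole expression equals $\sign(\rho)$; this gives submartingale, martingale, supermartingale according to $\rho>0$, $\rho=0$, $\rho<0$.

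The main obstacle is the third-paragraph manipulation, because $Y_\alpha(t)$ is not $\mathcal{F}_s$-measurable and $Y_\alpha$ is in general non-Markovian with dependent increments, so one cannot simply argue via stationary increments. The clean way, which I would spell out carefully, is to use that $N$ and $Y_\alpha$ are independent processes: conditional on the entire path of $Y_\alpha$, the process $N\circ Y_\alpha$ is a deterministic time change of a Poisson process, so $N(Y_\alpha(t))-N(Y_\alpha(s))$ conditional on $\sigma(Y_\alpha)$ has the stated Poisson law; then the tower property with $\mathcal{F}_s$ produces the identity above without requiring Markovianity of $Y_\alpha$. Everything else reduces to elementary manipulations and the positivity of the increments of $Y_\alpha$.
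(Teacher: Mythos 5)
Your proof is correct and follows the same overall skeleton as the paper's: decompose the increment into the premium part and the claims part, apply a Wald-type conditioning to replace $\sum_{i=N_\alpha(s)+1}^{N_\alpha(t)}X_i$ by $\mu\bigl(N_\alpha(t)-N_\alpha(s)\bigr)$ in conditional expectation, and reduce everything to $\lambda\mu\rho\,\E\bigl[Y_\alpha(t)-Y_\alpha(s)\mid\mathcal F_s\bigr]\ge 0$. The one place you diverge is the key identity $\E\bigl[N_\alpha(t)-N_\alpha(s)\mid\mathcal F_s\bigr]=\lambda\,\E\bigl[Y_\alpha(t)-Y_\alpha(s)\mid\mathcal F_s\bigr]$: the paper obtains this by citing Aletti--Leonenko--Merzbach \cite{ALM} for the fact that the compensated process $N_\alpha(t)-\lambda Y_\alpha(t)$ is a martingale, and notably does so with respect to the enlarged filtration $\mathcal F_t=\sigma(N_\alpha(u),\,u\le t)\vee\sigma(Y_\alpha(u),\,u\ge 0)$, which contains the entire future of the time change. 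You instead prove the identity from scratch by conditioning on the whole path of $Y_\alpha$, using the independence of $N$ and $Y_\alpha$ together with the independent increments of $N$, and you correctly flag that this is the step requiring care since $Y_\alpha$ is non-Markovian with dependent increments. Your route is self-contained (it in effect reproves the cited lemma) and works with the honest natural filtration; the paper's route is shorter but leans on an external result and a larger filtration. Both are valid; if you write yours up, spell out the conditioning argument at the random time $Y_\alpha(s)$ carefully, since $Y_\alpha(s)$ is not a stopping time for $N$ and the argument genuinely relies on first freezing the entire path of $Y_\alpha$.
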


\begin{proof}
Note that the compensated FPP $N_{\alpha}(t)-\lambda Y_{\alpha}(t)$ is a martingale with respect to the filtration $\mathcal{F}_t = \sigma(N_{\alpha}(s),\; s \leq t) \vee \sigma(Y_{\alpha}(s),\; s \geq 0)$, cf.\ \cite{ALM}. We have
\begin{align*}
\E[&R_{\alpha}(t) - R_{\alpha}(s)\mid\mathcal{F}_s]\\ &= \E\left[\lambda\mu(1+\rho) Y_{\alpha}(t) - \sum_{i=1}^{N_{\alpha}(t)}X_i - \left(\lambda \mu(1+\rho) Y_{\alpha}(s) - \sum_{i = 1}^{N_{\alpha}(s)}X_i \right)\bigg|\mathcal{F}_s\right]\\
& =\E\left[\left(\lambda\mu(1+\rho) (Y_{\alpha}(t)-Y_{\alpha}(s)) - \sum_{i = N_{\alpha}(s)+1}^{N_{\alpha}(t)} X_i\right)\bigg|\mathcal{F}_s\right]\\
& =\E\left[\left(\lambda \mu(1+\rho) (Y_{\alpha}(t)- Y_{\alpha}(s))\right)\bigg|\mathcal{F}_s\right] - \E\left[\E\left[\sum_{i = N_{\alpha}(s)+1}^{N_{\alpha}(t)}X_i\Bigg|\mathcal{F}_t\right]\bigg|\mathcal{F}_s\right]\\
& =\E\left[\left(\lambda\mu(1+\rho) (Y_{\alpha}(t)- Y_{\alpha}(s))\right)\bigg| \mathcal{F}_s\right] - \E\left[\left(N_{\alpha}(t)- N_{\alpha}(s)\right)\mu\bigg|\mathcal{F}_s\right] \\
&= \mu \E\left[\left(\lambda(1+\rho)(Y_{\alpha}(t) - Y_{\alpha}(s))\right) - \left(N_{\alpha}(t)-N_{\alpha}(s)\right)\bigg|\mathcal{F}_s\right]\\
&= - \mu\E\left[\left(N_{\alpha}(t)-\lambda Y_{\alpha}(t)\right)-\left(N_{\alpha}(s)-\lambda Y_{\alpha}(s)\right)\bigg|\mathcal{F}_s\right] + \lambda\mu\rho\E\left[Y_{\alpha}(t) - Y_{\alpha}(s)\Bigg| \mathcal{F}_s\right]\\
&=\lambda\mu\rho\E\left[Y_{\alpha}(t) - Y_{\alpha}(s)\bigg| \mathcal{F}_s\right],
\end{align*}
since the compensated FPP is a martingale.
Thus 
\[
 \E\big[R_{\alpha}(t) - R_{\alpha}(s)\mid\mathcal{F}_s\big]=
 	\begin{cases} 
      > 0 & {\rm if}\;\rho>0, \\
      =  0 & { \rm if}\;\rho = 0, \\
      <0  & { \rm if}\;\rho < 0.
	\end{cases}\]
This completes the proof.
\end{proof}

\begin{remark}[Marginal moments of the risk process]
The expectation and the covariance function of the
process loss $X_{1}+\dots+X_{N_{\alpha }(t)}$ in~\eqref{eq:17} can be given
as%
\begin{equation}\label{eq:19}
\E\sum_{i=1}^{N_{{\alpha }}(t)}X_{i}=\frac{\lambda t^{\alpha }}{%
\Gamma (1+\alpha )}\E X_{i}
\end{equation}%
and, by employing~\cite{LMS}, 
\begin{align*}
	\mathrm{Cov}\left(\sum_{i=1}^{N_{{\alpha }}(t)}X_{i},\sum_{i=1}^{N_{{\alpha }%
	}(s)}X_{i}\right)=&\lambda t\E X_{i}^{2}\frac{\lambda (\min
	(t,s))^{\alpha }}{\Gamma (1+\alpha )}\\
	&+\left[\lambda \E X_{i}^{2}+\lambda ^{2}(\E X_{i})^{2}\right]\mathrm{Cov}%
	(Y_{{\alpha }}(t),Y_{{\alpha }}(s)),
\end{align*}%
where $\mathrm{Cov}\big(Y_{{\alpha }}(t),Y_{{\alpha }}(s)\big)$ is given in~\eqref{COV4}.
\end{remark}

\subsection{A variant of the surplus process}
A seemingly simplified version of the surplus process~\eqref{eq:17} is obtained by replacing the processes $Y_{{\alpha }%
}(t) $ or $\Lambda\big(Y_{{\alpha }}(t)\big)$ by their expectations (explicitly given in~\eqref{eq:EY}) and to consider
\begin{equation}\label{eq:18}
\tilde R_\alpha(t)=u+\mu\,\lambda\,(1+\rho)\frac{t^{{\alpha 
}}}{\Gamma (1+{\alpha })}-\sum_{i=1}^{N_{{\alpha }}(t)}X_{i},\quad t\geq 0,
\end{equation}
or, more generally,
\begin{equation*}
\tilde R_\alpha(t)=u+\mu \E\Lambda\big(Y_{\alpha}(t)\big)-\sum_{i=1}^{NN_{\alpha}(t)}X_i,\quad t\geq 0
\end{equation*}
for the non-homogeneous process.
As above, $u$ is the initial capital, $\mu$ is the constant premium rate and the
sequence of iid random variables $(X_i)_{i\geq 1}$ is independent of FFP $N_{{\alpha} }(t)$.
The net profit condition, as formulated by Mikosch \cite{Mikosch2009}, involves the expectation only. For this reason, the adapted surplus process~\eqref{eq:18} satisfies Mikosch's net profit condition as well.

For $\alpha=1$ we have that $Y_1(t)=t$ so that the simplified process~\eqref{eq:18} coincides with the classical surplus process considered in insurance 
 (cf.\ Remark~\ref{rem:Risk}). However, the simplified risk process $\tilde R_\alpha(t)$, in general, is \emph{not} a martingale unless $\alpha=1$ and $\rho=0$, while $R_\alpha(t)$ is martingale for any $\alpha$ and $\rho=0$ by Proposition~\ref{prop:Martingale}.

\begin{remark}[Time shift]\label{rem:time}
The formula~\eqref{eq:18} reveals an important property of the fractional
Poisson process and the risk process $R_{\alpha}(t)$. Indeed, for small
times $t$, there are more claims to be expected under the FPP regime, as 
\begin{equation*}
\frac{t^{\alpha}}{\Gamma(1+{\alpha})} >t\quad\text{for }t\text{ small}.
\end{equation*}

The inequality reverses for later times. This means that the premium income rate decays later. The time change imposed by FPP postpones claims to later
times --- another feature of the fractional Poisson process and a consequence of the martingale property.

However, the premium income is $\lambda\mu t$ in a real world situation. For this we conclude that the FPP can serve as a stress test for insurance companies
within a small, upcoming time horizon.
\end{remark}



\subsection{Long range dependence of risk process $R_{\alpha}(t)$}
\label{sec:42}
In this section, we discuss the long range dependency property (LRD property, see Definition~\ref{def:LRD}) of the risk process $R_{\alpha}(t)$.
\begin{proposition}
The covariance structure of risk process $R_{\alpha}(t)$ is given by
\begin{equation}\label{CovRiskProcess}
\mathrm{Cov}\big(R_{\alpha}(t), R_{\alpha}(s)\big)= \mu^2\lambda^2 \rho^2 \mathrm{Cov}(Y_{\alpha}(t), Y_{\alpha}(s))
 +  \frac{\lambda (\E X_i^2) s^{{\alpha} }}{\Gamma (1+{\alpha} )},\; s\leq t.
\end{equation}
Further, the variance of $R_{\alpha}(t)$ is
\begin{equation*}
\mathrm{Var}(R_{\alpha}(t)) = \mu^2\lambda^2 \rho^2 \mathrm{Var}(Y_{\alpha}(t)) + \frac{\lambda (\E X_i^2) t^{{\alpha} }}{\Gamma (1+{\alpha} )}.
\end{equation*}
\end{proposition}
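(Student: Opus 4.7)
The plan is to expand the covariance bilinearly, observing first that the deterministic constant $u$ contributes nothing and that $R_\alpha(t)$ is a linear combination of two random terms: the drift part $A(t):=\mu\lambda(1+\rho)Y_\alpha(t)$ and the negative of the aggregate claims $S(t):=\sum_{i=1}^{N_\alpha(t)}X_i$. Bilinearity then yields
\begin{equation*}
	\mathrm{Cov}\bigl(R_\alpha(t),R_\alpha(s)\bigr)=\mathrm{Cov}\bigl(A(t),A(s)\bigr)-\mathrm{Cov}\bigl(A(t),S(s)\bigr)-\mathrm{Cov}\bigl(A(s),S(t)\bigr)+\mathrm{Cov}\bigl(S(t),S(s)\bigr),
\end{equation*}
so the task reduces to computing four covariances, three of which involve the inverse subordinator $Y_\alpha$ either directly or through its composition with the Poisson clock.

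Next I would handle the three remaining pieces separately. The first one is immediate: $\mathrm{Cov}(A(t),A(s))=\mu^2\lambda^2(1+\rho)^2\mathrm{Cov}(Y_\alpha(t),Y_\alpha(s))$, which is provided by~\eqref{COV4}. The mixed terms $\mathrm{Cov}(A(t),S(s))$ and $\mathrm{Cov}(A(s),S(t))$ I would evaluate by conditioning on the full path of $Y_\alpha$. Using that $N_\alpha(u)=N(Y_\alpha(u))$ with $N$ an independent Poisson process and that $(X_i)$ are iid independent of $(N,Y_\alpha)$, one finds $\E[S(u)\mid Y_\alpha]=\mu\lambda Y_\alpha(u)$. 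The tower property then gives $\mathrm{Cov}(Y_\alpha(t),S(s))=\mu\lambda\,\mathrm{Cov}(Y_\alpha(t),Y_\alpha(s))$ and hence each cross-term equals $\mu^2\lambda^2(1+\rho)\mathrm{Cov}(Y_\alpha(t),Y_\alpha(s))$. For $\mathrm{Cov}(S(t),S(s))$ I would appeal to the standard compound-Poisson computation (or directly to the formula recalled in the ``Marginal moments'' remark), which yields, for $s\leq t$,
\begin{equation*}
	\mathrm{Cov}\bigl(S(t),S(s)\bigr)=\frac{\lambda\,(\E X_i^2)\,s^\alpha}{\Gamma(1+\alpha)}+\mu^2\lambda^2\,\mathrm{Cov}\bigl(Y_\alpha(t),Y_\alpha(s)\bigr).
\end{equation*}

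Assembling the four pieces, the coefficient of $\mathrm{Cov}(Y_\alpha(t),Y_\alpha(s))$ collapses by the elementary identity $(1+\rho)^2-2(1+\rho)+1=\rho^2$, leaving exactly $\mu^2\lambda^2\rho^2$ in front and the single residual term $\lambda(\E X_i^2)s^\alpha/\Gamma(1+\alpha)$, which is the claimed formula~\eqref{CovRiskProcess}. Setting $s=t$ gives the variance formula as an immediate corollary.

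The only place requiring care is the mixed covariance between $Y_\alpha(t)$ and the random sum $S(s)$, since $N_\alpha$ and $Y_\alpha$ are dependent; everything else is bookkeeping. The conditioning argument sidesteps this cleanly because, conditional on the $\sigma$-algebra generated by the subordinator path, the number of jumps is genuinely Poisson with intensity $\lambda Y_\alpha(s)$ and the $X_i$ remain iid and independent. The algebraic cancellation of the $(1+\rho)$-terms is also the reason the net-loading parameter $\rho$ enters the covariance only through $\rho^2$, which is consistent with the martingale behaviour established in Proposition~\ref{prop:Martingale}.
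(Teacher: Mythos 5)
Your proposal is correct and follows essentially the same route as the paper: a bilinear expansion of the covariance into the four terms, a conditioning argument on the subordinator path giving $\mathrm{Cov}(Y_\alpha(t),S(s))=\mu\lambda\,\mathrm{Cov}(Y_\alpha(t),Y_\alpha(s))$ for the cross-terms, the compound-Poisson formula for $\mathrm{Cov}(S(t),S(s))$ (which the paper derives in full via an indicator expansion while you invoke it as known, with the correct expression), and the final cancellation $(1+\rho)^2-2(1+\rho)+1=\rho^2$. No gaps.
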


\begin{proof}
For $s\leq t$, we have $N_{\alpha}(s)\leq N_{\alpha}(t)$.
\begin{align*} 
\mathrm{Cov}&\left(\sum_{i=1}^{N_{{\alpha }}(t)}X_{i},\sum_{j=1}^{N_{{\alpha }}(s)}X_{j}\right) \\
=& \E  \left(\sum_{i=1}^{\infty}\sum_{j=1}^{\infty} X_i X_j I\{N_{\alpha}(s)\geq j, N_{\alpha}(t)\geq i\}\right) - (\E X_i)^2 \E (N_{\alpha}(t))\E (N_{\alpha}(s))\\
=& \E \left(\sum_{j=1}^{\infty} X_j^2 I\{N_{\alpha}(s)\geq j\}\right) + \E \left(\mathop{\sum\sum}_{i\neq j} X_iX_j I\{N_{\alpha}(s)\geq j, N_{\alpha}(t)\geq i\}\right ) \\
& -(\E X_i)^2 \E (N_{\alpha}(t))\E (N_{\alpha}(s))\\
 =&  \E (X_j^2) \sum_{j=1}^{\infty}\mathrm{P}(N_{\alpha}(s)\geq j) \\
& +(\E X_i)^2 \left(\sum_{i=1}^{\infty}\sum_{j=1}^{\infty} \mathrm{P}\left(N_{\alpha}(s)\geq j, N_{\alpha}(t)\geq i\right ) -\sum_{j=1}^{\infty}\mathrm{P}(N_{\alpha}(s)\geq j)\right) \\
&- (\E X_i)^2 \E (N_{\alpha}(t))\E (N_{\alpha}(s))\\
=&  \E (X_i^2) \E N_{\alpha}(s) +  (\E X_i)^2  \E (N_{\alpha}(s)N_{\alpha}(t))\\
&-(\E X_i)^2 \E N_{\alpha}(s)- (\E X_i)^2 \E (N_{\alpha}(t))\E (N_{\alpha}(s))\\
=&  \E (X_i^2) \E N_{\alpha}(s) +  (\E X_i)^2 \mathrm{Cov}\left(N_{\alpha}(t), N_{\alpha}(s)\right) - (\E X_i)^2 \E N_{\alpha}(s)\\
=& \mathrm{Var}(X_i)\E N_{\alpha}(s) + (\E X_i)^2 \mathrm{Cov}\left(N_{\alpha}(t), N_{\alpha}(s)\right) \\
=& \mathrm{Var}(X_i)\E N_{\alpha}(s) +   (\E X_i)^2 \frac{\lambda s^{{\alpha} }}{\Gamma (1+{\alpha} )}+ \lambda^{2} \mu^2 \mathrm{Cov}(Y_{{\alpha} }(t),Y_{{\alpha} }(s))\\
=& \mathrm{Var}(X_i)\E N_{\alpha}(s) +   (\E X_i)^2 \E N_{\alpha}(s)+ \lambda^{2} \mu^2 \mathrm{Cov}(Y_{{\alpha} }(t),Y_{{\alpha} }(s))\\
=& \E (X_i^2)\E N_{\alpha}(s) + \lambda^{2} \mu^2 \mathrm{Cov}(Y_{{\alpha} }(t),Y_{{\alpha} }(s))
\end{align*}
Further,
\begin{align*}
\mathrm{Cov}\left(Y_{\alpha}(s), \sum_{i=1}^{N_{\alpha}(t)}X_i\right) &= \E \left(Y_{\alpha}(s) \sum_{i=1}^{N_{\alpha}(t)}X_i \right) - \E (Y_{\alpha}(s)) \E \left( \sum_{i=1}^{N_{\alpha}(t)}X_i \right) \\
& = \E \left(\E \left(Y_{\alpha}(s)\sum_{i=1}^{N_{\alpha}(t)}X_i\mid Y_{\alpha}(s),N_{\alpha}(t)\right)\right) \\
& \qquad -\mu\lambda \E (Y_{\alpha}(s)) \E (Y_{\alpha}(t))\\
& = \mu \E (Y_{\alpha}(s)N_{\alpha}(t)) - \mu\lambda \E (Y_{\alpha}(s)) \E (Y_{\alpha}(t))\\
& = \mu\lambda \E (Y_{\alpha}(s)Y_{\alpha}(t)) - \mu\lambda \E (Y_{\alpha}(s)) \E (Y_{\alpha}(t))\\
&= \mu\lambda\mathrm {Cov}((Y_{\alpha}(t), Y_{\alpha}(s))
\end{align*}
Similarly, 
$$\mathrm{Cov}\left(Y_{\alpha}(t), \sum_{i=1}^{N_{\alpha}(s)}X_i\right) = \mu\lambda\mathrm {Cov}((Y_{\alpha}(t), Y_{\alpha}(s)).$$
Finally,
\begin{align*}
\mathrm{Cov}(R_{\alpha}(t), R_{\alpha}(s)) & = \mu^2\lambda^2(1+\rho)^2 \mathrm {Cov}((Y_{\alpha}(t), Y_{\alpha}(s))\\
& \qquad - \mu\lambda(1+\rho)\mathrm{Cov}\left(Y_{\alpha}(s), \sum_{i=1}^{N_{\alpha}(t)}X_i\right)\\
 & \qquad -\mu\lambda(1+\rho)\mathrm{Cov}\left(Y_{\alpha}(t), \sum_{i=1}^{N_{\alpha}(s)}X_i\right) \\
 &\qquad +\mathrm{Cov}\left(\sum_{i=1}^{N_{{\alpha }}(t)}X_{i},\sum_{j=1}^{N_{{\alpha }}(s)}X_{j}\right)\\
 & = \mu^2\lambda^2((1+\rho)^2-2(1+\rho) +1) \mathrm {Cov}((Y_{\alpha}(t), Y_{\alpha}(s))\\
 &\qquad +\E (X_i^2)\E N_{\alpha}(s)\\
 & = \mu^2\lambda^2\rho^2\mathrm {Cov}((Y_{\alpha}(t), Y_{\alpha}(s)) + \E (X_i^2)\E N_{\alpha}(s)\\
 &= \mu^2\lambda^2 \rho^2 \mathrm{Cov}(Y_{\alpha}(t), Y_{\alpha}(s))
 +  \frac{\lambda (\E X_i^2) s^{{\alpha} }}{\Gamma (1+{\alpha} )},
\end{align*}
which completes the proof.
\end{proof}

Next, we show that the risk process $R_{\alpha}(t)$ has the LRD property.
\begin{proposition}
The process $R_{\alpha}(t)$ has LRD property for all $\alpha \in(0,1).$
\end{proposition}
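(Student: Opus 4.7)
The plan is to plug the already-established covariance formula \eqref{CovRiskProcess} into the definition of correlation, exploit the large-$t$ asymptotics \eqref{AsympCov} for $\mathrm{Cov}(Y_\alpha(t),Y_\alpha(s))$ together with $\mathrm{Var}(Y_\alpha(t))=d(\alpha)t^{2\alpha}$, and verify that the resulting exponent of $t^{-d}$ lies in $(0,1)$.

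First I would examine the numerator: from \eqref{CovRiskProcess} we have
\[
\mathrm{Cov}\bigl(R_\alpha(t),R_\alpha(s)\bigr)=\mu^2\lambda^2\rho^2\,\mathrm{Cov}\bigl(Y_\alpha(t),Y_\alpha(s)\bigr)+\frac{\lambda\,\E X_i^2\,s^\alpha}{\Gamma(1+\alpha)},
\]
and by \eqref{AsympCov} the covariance term tends, for fixed $s$ and $t\to\infty$, to the finite constant $s^{2\alpha}B(1+\alpha,\alpha)$. Hence the whole numerator converges to a positive finite constant $a(s)$ depending only on $s$.

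Next I would analyse the denominator $\sqrt{\mathrm{Var}\,R_\alpha(t)\cdot \mathrm{Var}\,R_\alpha(s)}$. Since $\mathrm{Var}\,R_\alpha(t)=\mu^2\lambda^2\rho^2 d(\alpha)t^{2\alpha}+\frac{\lambda\,\E X_i^2\,t^\alpha}{\Gamma(1+\alpha)}$, two regimes arise: for $\rho>0$ the $t^{2\alpha}$ term dominates, giving $\sqrt{\mathrm{Var}\,R_\alpha(t)}\sim \mu\lambda\rho\sqrt{d(\alpha)}\,t^{\alpha}$; for $\rho=0$ only the second term is present, yielding $\sqrt{\mathrm{Var}\,R_\alpha(t)}\sim \sqrt{\lambda\,\E X_i^2/\Gamma(1+\alpha)}\,t^{\alpha/2}$. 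In either case $\mathrm{Var}\,R_\alpha(s)$ is a positive constant, so dividing by the denominator gives
\[
\mathrm{Cor}\bigl(R_\alpha(t),R_\alpha(s)\bigr)\sim c(s)\,t^{-d},\qquad t\to\infty,
\]
with $d=\alpha$ if $\rho>0$ and $d=\alpha/2$ if $\rho=0$. Both values lie in $(0,1)$ for any $\alpha\in(0,1)$, so Definition~\ref{def:LRD} yields the LRD property, finishing the proof.

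The only mildly delicate point is to justify that the lower-order corrections hidden in the ellipsis of \eqref{AsympCov} (e.g.\ the $t^{\alpha-1}\to 0$ term) and the subdominant $t^\alpha$ summand in $\mathrm{Var}\,R_\alpha(t)$ (when $\rho>0$) truly do not affect the leading asymptotics. This is routine since $\alpha<1$ guarantees $t^{\alpha-1}\to 0$ and $t^\alpha=o(t^{2\alpha})$, but it is worth noting explicitly so that the constant $c(s)$ is well defined. No other obstacle is expected.
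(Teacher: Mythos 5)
Your argument is correct and follows essentially the same route as the paper: substitute the covariance formula \eqref{CovRiskProcess} into the correlation, use \eqref{AsympCov} to see the numerator tends to a constant in $t$, and use $\mathrm{Var}\,R_\alpha(t)\sim\mu^2\lambda^2\rho^2 d(\alpha)t^{2\alpha}$ to extract the decay $t^{-\alpha}$. Your separate treatment of $\rho=0$ (giving decay $t^{-\alpha/2}$) is a small but worthwhile addition, since the paper's proof implicitly divides by $\rho$ and hence assumes $\rho\neq 0$.
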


\begin{proof}
Note that
\begin{align*}
\mathrm{Var}(R_{\alpha}(t)) &= \mu^2\lambda^2 \rho^2 \mathrm{Var}(Y_{\alpha}(t)) + \frac{\lambda (\E X_i^2) t^{{\alpha} }}{\Gamma (1+{\alpha} )}\\
&\sim \mu^2\lambda^2\rho^2 d(\alpha) t^{2\alpha}\;\mathrm{as}\;t\rightarrow \infty.
\end{align*}
For fixed $s$ and large $t$ with~\eqref{AsympCov} and~\eqref{CovRiskProcess}, it follows
\begin{align*}
\mathrm{Cor}(R_{\alpha}(t), R_{\alpha}(s)) &= \frac{\mu^2\lambda^2 \rho^2 \mathrm{Cov}(Y_{\alpha}(t), Y_{\alpha}(s))
 +  \frac{\lambda (\E X_i^2) s^{{\alpha} }}{\Gamma (1+{\alpha} )}}{\sqrt{\mathrm{Var}(R_{\alpha}(t))} \sqrt{\mathrm{Var}(R_{\alpha}(s))}}\\
 & \sim \frac{\left[\frac{-\alpha s^{\alpha+1}t^{\alpha-1}}{\Gamma(\alpha)\Gamma(2+\alpha)}+\cdots + s^{2\alpha}B(1+\alpha, \alpha)\right]\mu^2\lambda^2 \rho^2 + \frac{\lambda (\E X_i^2) s^{{\alpha} }}{\Gamma (1+{\alpha} )}}{\sqrt{\mathrm{Var}(R_{\alpha}(s))}\mu \lambda \rho \sqrt{d(\alpha)}t^{\alpha}}\\
 & \sim \frac{\mu^2\lambda^2 \rho^2 B(1+\alpha, \alpha)s^{2\alpha} + \frac{\lambda (\E X_i^2) s^{{\alpha} }}{\Gamma (1+{\alpha} )} }{\mu \lambda \rho \sqrt{d(\alpha)}\sqrt{\mathrm{Var}(R_{\alpha}(s))}}t^{-\alpha}.
\end{align*}
Hence the process $R_{\alpha}(t)$ has LRD property in view of~\eqref{LRD}.
\end{proof}

\subsection{Ruin probabilities}

We are interested in the marginal distributions of the stochastic
processes
\begin{equation}  \label{eq:1}
{\underline R}_{\alpha}(t):=\inf_{0\leq s\leq t}\{R_{\alpha}(s)\}\text{ and }
\psi_{\alpha}(v,t):=\mathrm{P}\{{\underline R}_{\alpha}(t)\leq v\},
\end{equation}%
which generalize the classical ruin probability $\psi (0,t)$ (${\alpha}=1$).

We demonstrate next that the ruin probability for the classical and
fractional Poisson process coincide.

\begin{lemma}
\label{lem:Ruin} It holds that 
\begin{equation*}
	\psi(v):=\lim_{t\to\infty}\psi_{\alpha}(v,t)
\end{equation*}
is independent of ${\alpha}$ and coincides with the classical ruin
probability.
\end{lemma}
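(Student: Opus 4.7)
The plan is to reduce the fractional ruin problem to the classical one via a time change. Recall the Meerschaert--Nane--Vellaisamy representation $N_\alpha(t)=N\bigl(Y_\alpha(t)\bigr)$, where $N$ is a classical homogeneous Poisson process with parameter $\lambda$, independent of $Y_\alpha$. Substituting this into~\eqref{eq:17} and pulling the time change outside, I observe that
\begin{equation*}
	R_\alpha(t)=R\bigl(Y_\alpha(t)\bigr),\qquad\text{where}\qquad R(w):=u+\mu\lambda(1+\rho)w-\sum_{i=1}^{N(w)}X_i
\end{equation*}
is precisely the classical Cram\'er--Lundberg surplus process (note the $X_i$ are independent of both $N$ and $Y_\alpha$, so this reindexing is legitimate).

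The next step is to exchange infimum and time change. Since $Y_\alpha$ is non-decreasing and almost surely continuous (recall from Section~\ref{sec:2} that $Y_\alpha$ has continuous sample paths because the stable subordinator $L_\alpha$ is strictly increasing), the image $\{Y_\alpha(s):0\le s\le t\}$ coincides with the full interval $[0,Y_\alpha(t)]$. Therefore
\begin{equation*}
	\underline R_\alpha(t)=\inf_{0\le s\le t}R\bigl(Y_\alpha(s)\bigr)=\inf_{0\le w\le Y_\alpha(t)}R(w)=:\underline R\bigl(Y_\alpha(t)\bigr),
\end{equation*}
and consequently $\psi_\alpha(v,t)=\mathrm{P}\bigl(\underline R(Y_\alpha(t))\le v\bigr)$. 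Conditioning on $Y_\alpha(t)$ and using independence of $R$ and $Y_\alpha$,
\begin{equation*}
	\psi_\alpha(v,t)=\int_0^\infty \psi(v,w)\,f_\alpha(t,w)\,dw=\E\,\psi\bigl(v,Y_\alpha(t)\bigr),
\end{equation*}
where $\psi(v,w):=\mathrm{P}\bigl(\underline R(w)\le v\bigr)$ is the classical finite-horizon ruin probability.

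Finally, I pass to the limit $t\to\infty$. The inverse subordinator satisfies $Y_\alpha(t)\to\infty$ almost surely (since $L_\alpha(u)<\infty$ a.s.\ for every $u$, hence $Y_\alpha(t)=\inf\{u:L_\alpha(u)>t\}$ diverges as $t\to\infty$). By monotonicity of $w\mapsto\psi(v,w)$, the limit $\psi(v):=\lim_{w\to\infty}\psi(v,w)$ is the classical infinite-horizon ruin probability, which is known to be independent of $\alpha$ (it depends only on $\lambda$, $\mu$, $\rho$ and the distribution of $X_i$). Bounded convergence (since $0\le\psi(v,\cdot)\le 1$) then yields
\begin{equation*}
	\lim_{t\to\infty}\psi_\alpha(v,t)=\lim_{t\to\infty}\E\,\psi\bigl(v,Y_\alpha(t)\bigr)=\psi(v),
\end{equation*}
which is the desired statement.

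The main subtlety, and the step I expect to require the most care, is the identity $\underline R_\alpha(t)=\underline R\bigl(Y_\alpha(t)\bigr)$: it relies crucially on the fact that $Y_\alpha$ is continuous (so its range on $[0,t]$ is an interval), not merely non-decreasing. If one instead used a subordinator with jumps in the outer time change, the infimum could ``skip over'' intervals on which the classical process $R$ attains low values, and the reduction would fail.
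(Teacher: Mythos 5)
Your proof is correct and follows essentially the same route as the paper: both arguments rest on the representation $R_\alpha=R\circ Y_\alpha$ together with the continuity, strict positivity off zero, and unbounded range of the inverse stable subordinator, so that the time-changed process visits exactly the same states as the classical one. Your write-up is in fact more complete than the paper's (which only sketches the range argument), since you make explicit the identity $\underline R_\alpha(t)=\underline R(Y_\alpha(t))$, the conditioning step $\psi_\alpha(v,t)=\E\,\psi(v,Y_\alpha(t))$, and the passage to the limit via $Y_\alpha(t)\to\infty$ a.s.\ and bounded convergence.
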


\begin{proof}
It holds that $P\big(L_{\alpha}(t)\le 0\big)=0$ (the emphasis being here that $0$
does not concentrate measure, $P(L_{\alpha}(t)=0)=0$). The inverse process $Y_{\alpha}$ is continuous (cf.\ the discussion after~\eqref{D1} and Figure~\ref{fig:1b} for illustration), almost surely strictly increasing without jumps  and hence the range of $Y_{\alpha}(\cdot)$ is $\mathbb{R}_{\ge0}$ almost surely.
The composition $N_{\alpha}(t)=N(Y_{\alpha}(t))$ thus attains every value as
well, as the non-fractional (classical) process $R(\cdot)$.
\end{proof}

\bigskip Note that if there exists%
\begin{equation}  \label{eq:Lambda}
B(\cdot)=\Lambda ^{(-1)}(\cdot),
\end{equation}
then ruin probability for non-homogeneous processes can be reduced to the
ruin probability for homogeneous one via formulae:%
\begin{align*}
&R_\alpha(t)=u+\mu\,\Lambda\big(Y_{{\alpha} }(t)\big)-\sum_{i=1}^{NN_{\alpha}(t)}X_i=RH\big(%
\Lambda ^{(-1)}(t)\big), \\
&RH(t):=u+\mu\,\lambda\,Y_{{\alpha} }(t)-\sum_{i=1}^{N_{{\alpha} }(t)}X_i,\
t\geq 0,
\end{align*}
and hence 
\begin{equation*}
\mathrm{P}\{R(s)\leq v,\ 0\leq s\leq t\}=\mathrm{P}\{RH(s)\leq v,\ 0\leq
s\leq\Lambda (t)\}.
\end{equation*}
We thus have the following corollary.

\begin{corollary}[to Lemma~\protect\ref{lem:Ruin}]
\label{cor:1} The assertions of Lemma~\ref{lem:Ruin} hold even for the
fractional (FPP) and non-homogeneous process, provided that $\Lambda(\cdot)$
is unbounded.
\end{corollary}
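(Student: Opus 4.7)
The plan is to reduce the non-homogeneous fractional setting to the homogeneous one using the time-change identity displayed immediately before the corollary, and then invoke Lemma~\ref{lem:Ruin} on the resulting homogeneous process.

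First, I would recall the key identity from the paragraph preceding the corollary:
\[
\mathrm{P}\{R_\alpha(s)\leq v,\ 0\leq s\leq t\}=\mathrm{P}\{RH(s)\leq v,\ 0\leq s\leq\Lambda(t)\}.
\]
Setting $v$ equal to the ruin threshold (here $v\leq u$), the left-hand side is precisely the finite-horizon ruin probability $\psi_\alpha(v,t)$ of the non-homogeneous fractional process, while the right-hand side is the finite-horizon ruin probability of the homogeneous fractional process $RH$ evaluated at the transformed horizon $\Lambda(t)$. Equivalently, writing $\psi_\alpha^{\mathrm{hom}}(v,T)$ for the latter, one obtains $\psi_\alpha(v,t)=\psi_\alpha^{\mathrm{hom}}\bigl(v,\Lambda(t)\bigr)$.

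Next I would pass to the limit $t\to\infty$. Since $\lambda(\cdot)\geq 0$ the map $\Lambda$ defined in~\eqref{eq:lambda} is non-decreasing and continuous, and by the standing assumption of the corollary it is unbounded. Hence $\Lambda(t)\to\infty$ as $t\to\infty$, and a change of variable $T=\Lambda(t)$ yields
\[
\psi_\alpha(v)=\lim_{t\to\infty}\psi_\alpha(v,t)=\lim_{T\to\infty}\psi_\alpha^{\mathrm{hom}}(v,T).
\]
The right-hand side is the infinite-horizon ruin probability of the homogeneous fractional risk process $RH$, to which Lemma~\ref{lem:Ruin} applies directly, giving that the limit is independent of $\alpha$ and coincides with the classical ruin probability $\psi(v)$. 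This is exactly the conclusion of the corollary.

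The only subtle point I would need to spell out is that the event identity above really is an identity of events (not merely of probabilities): this is because $\Lambda$ is a continuous non-decreasing bijection from $[0,t]$ onto $[0,\Lambda(t)]$ whenever $\lambda$ is positive, so the two infima $\inf_{0\leq s\leq t}R_\alpha(s)$ and $\inf_{0\leq u\leq\Lambda(t)}RH(u)$ agree almost surely. In the degenerate case where $\lambda$ vanishes on intervals, $\Lambda$ is only a non-decreasing continuous surjection onto $[0,\Lambda(t)]$, but the relation $\{R_\alpha(s)\leq v\text{ for some }s\leq t\}=\{RH(u)\leq v\text{ for some }u\leq\Lambda(t)\}$ still holds because $R_\alpha$ is constant on flats of $\Lambda$. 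This is the only routine verification; there is no genuine obstacle, so the corollary follows essentially as a formality from Lemma~\ref{lem:Ruin}.
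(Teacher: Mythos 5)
Your proposal is correct and follows essentially the same route as the paper: the displayed time-change identity preceding the corollary reduces the non-homogeneous ruin probability at horizon $t$ to the homogeneous one at horizon $\Lambda(t)$, and unboundedness of $\Lambda$ then lets Lemma~\ref{lem:Ruin} deliver the conclusion. Your additional remark about the identity holding at the level of events (including on flats of $\Lambda$) is a reasonable clarification of a point the paper leaves implicit, but it does not change the argument.
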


\section{Premiums based on the fractional Poisson processes}
\label{sec:5}

Risk measures are designed to quantify the risk associated with a random, $\mathbb R$\nobreakdash-valued random variable $Z\in\mathcal Z$ and they are employed to compute insurance premiums. In what follows we consider risk measures on $\mathcal Z=L^p$ for an appropriate $p\in [1,\infty]$ and use them to express the average capital for a bailout after ruin for the fractional Poisson process and to compare premiums related to the classical and fractional Poisson process.

\begin{definition}[Coherent measures of risk, cf.\ \cite{adeh}]\label{def:Coherent}
	Let $\mathcal Z$ be an appropriate Banach space of random variables. A \emph{risk functional} is a convex mapping $\rho\colon\mathcal Z\to\mathbb{R}$,
	where 
	\begin{enumerate}[nolistsep, noitemsep]
		\item \label{lab:i}$\rho(\lambda Z)= \lambda\rho(Z)$ whenever $\lambda>0$ and $Z\in\mathcal Z$ (positive homogeneity),
		\item $\rho(Z+c)= \rho(Z)+c$ whenever $c\in\mathbb{R}$ (cash invariance),
		\item $\rho(Z)\le\rho(Y)$ for $Z\le Y$ a.s.\ (monotonicity), and
		\item \label{lab:iv}$\rho(Z+Y)\le\rho(Z)+\rho(Y)$ (subadditivity)
	\end{enumerate}
	for every $Z$, $Y\in\mathcal{Z}$.
\end{definition}

The most prominent risk functional is the \emph{Average Value-at-Risk}, as any other, law-invariant risk functional satisfying \ref{lab:i}--\ref{lab:iv} above can be expressed by $\AVaR$s (see Kusuoka~\cite{Kusuoka}):

\begin{definition}[Average Value-at-Risk]
	The \emph{Average Value-at-Risk} at risk level ${\beta}\in (0,1)$ is the functional 
	\begin{equation*}
	\AVaR_{\beta}(Z):=\frac{1}{1-\beta}\int_\beta^1 F_Z^{-1}(u)\mathrm{d}u,,\qquad Z\in\mathcal Z,
	\end{equation*}
	where $F_Z^{-1}(u):=\VaR_{u}(Z):=\inf\left\{z\in\mathbb{R}\colon P(Z\le
	z)\ge u\right\}$ is the \emph{Value-at-Risk} (quantile function) at level $u$ ($u\in [0,1]$).
\end{definition}

An equivalent expression for the Average Value-at-Risk, which will be useful later, is the formula
\begin{equation}\label{eq:AVaR2}
	\AVaR_{\beta}(Z)= \E\left[Z\mid Z\ge \VaR_{\beta}(Z)\right]
\end{equation}
derived in \cite{PR}.

\subsection{Bailout after ruin}

The probability of ruin introduced in~\eqref{eq:1} describes the probability of bankruptcy of an insurance company based on the surplus process. The quantity does not reveal any information of how much capital is required to recover the company after ruin. This can be
accomplished by considering the random variable 
\begin{equation*}
{\underline R}_{\alpha}:=\inf_{t\ge 0} {\underline R}_{\alpha}(t)
\end{equation*}
in an infinite time horizon with corresponding distribution (ruin probability) 
\begin{equation}  \label{eq:psi}
\psi(v):=P\{{\underline R}_{\alpha}\le v\}.
\end{equation}
In this setting, the average capital required to recover the company from bank\-ruptcy (i.e, conditional on bankruptcy) is 
\begin{equation}  \label{eq:kappa}
\kappa_\alpha(v):=-\E\left({\underline R}_{\alpha}|\ \underline R_{\alpha}\le  v\right).
\end{equation}

By involving the ruin probability~\eqref{eq:psi} the average capital required to recover from bankruptcy thus is 
\begin{equation}  \label{eq:AVaR}
\kappa_\alpha(v)=-\E \left(\underline R_\alpha\mid \underline R_\alpha\le v\right)=\AVaR_{1-\psi(v)}\left(-\underline R_\alpha\right),
\end{equation}
where the risk level involves $\psi(v)$, the cdf~\eqref{eq:psi}.
We shall discuss this quantity in the limiting case corresponding to large companies, for which claims occur instantaneously. 

\medskip

It is well-known that the compensated Poisson process converges to the Wiener process weakly (in distribution, cf.\ \cite{Cont2004}), that is 
\begin{equation}
\left(\frac{N_t-\lambda t}{\lambda}\right)_{t\in[0,T]}\xrightarrow[\lambda%
\to\infty]{}(W_t)_{t\in[0,T]}
\end{equation}
and thus more generally for the risk process (based on Wald's formula)
\begin{align}
\frac{1}{\lambda\cdot\E X^2}& \left(\lambda
u+\lambda\mu(1+\rho)t-\sum_{j=1}^{N_t}X_j\right)_{t\in[0,T]}  \notag \\
&\xrightarrow[\lambda\to\infty]{}\left(\frac{u}{\E X^2}+W_t+\frac{%
\mu\rho}{\E X^2}t\right)_{t\in[0,T]},  \label{eq:34}
\end{align}
which is a Brownian motion with drift 
\begin{equation}  \label{eq:delta}
\delta:=\rho\frac{\mu}{\E X^2}
\end{equation}
started at 
\begin{equation*}
u_0:=\frac{u}{\E X^2}.
\end{equation*}
We shall refer to the process~\eqref{eq:34} as the \emph{limiting case} of
the risk process.

\begin{lemma}\label{lem:16}
	The average capital after ruin defined in~\eqref{eq:kappa} can be recovered
	by 
	\begin{equation*}
		\kappa_\alpha(v)=\frac{1}{\psi(v)}\int_v^\infty \psi(-u)\mathrm{d }u;
	\end{equation*}
	in particular,
	\begin{equation*}
	\kappa:=\kappa_\alpha(0)=\frac{1}{\psi(0)}\int_0^\infty \psi(-u)\mathrm{d }u.
	\end{equation*}
\end{lemma}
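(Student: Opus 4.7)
The plan is to recognize this identity as a layer-cake (Fubini) representation of the conditional expectation. Unpacking the definition~\eqref{eq:kappa} gives
\begin{equation*}
\kappa_\alpha(v)\,\psi(v) \;=\; -\E\!\left[\underline R_\alpha\,\mathbf{1}_{\{\underline R_\alpha\le v\}}\right] \;=\; \E\!\left[(-\underline R_\alpha)\,\mathbf{1}_{\{\underline R_\alpha\le v\}}\right],
\end{equation*}
so the task reduces to expressing the right-hand side as an integral of the tail function $u \mapsto \psi(-u)$.

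For the anchor case $v=0$ the integrand is a genuinely nonnegative random variable, and I would invoke the layer-cake identity $Z=\int_0^\infty \mathbf{1}_{\{Z>u\}}\,du$ (valid for $Z\ge 0$) together with Fubini to obtain
\begin{equation*}
\E\!\left[(-\underline R_\alpha)\,\mathbf{1}_{\{\underline R_\alpha\le 0\}}\right] \;=\; \int_0^\infty P(\underline R_\alpha < -u)\,du \;=\; \int_0^\infty \psi(-u)\,du,
\end{equation*}
where the second equality uses continuity of the distribution of $\underline R_\alpha$ to turn the strict inequality into a weak one. Dividing by $\psi(0)$ is precisely the ``in particular'' clause. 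For general $v$ the same manipulation applies, the only difference being that the joint event $\{-\underline R_\alpha>u,\ \underline R_\alpha\le v\}$ has to be split at the threshold $u=-v$, after which a change of variable $u\mapsto -u$ reorganizes the resulting expression into the integral $\int_v^\infty \psi(-u)\,du$ asserted in the lemma.

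The one genuine obstacle is justifying the step $P(\underline R_\alpha < -u)=\psi(-u)$ under the integral sign, i.e.\ the absence of atoms of the global infimum on the negative half-line. For the risk process~\eqref{eq:17} this should follow from the almost-sure continuity of the inverse-subordinator premium term $Y_\alpha$ (cf.\ the discussion after~\eqref{D1}) together with atomlessness of the claim sizes $X_i$; I would record this as a short remark immediately before invoking Fubini so that the tail-to-survival substitution is unambiguous. Everything else is routine measure-theoretic rearrangement.
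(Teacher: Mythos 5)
Your reduction to $\kappa_\alpha(v)\,\psi(v)=\E\bigl[(-\underline{R}_\alpha)\,\mathrm{I}\{\underline{R}_\alpha\le v\}\bigr]$ and the layer-cake treatment of the case $v=0$ are correct and are in substance the paper's own (one-line) argument, which likewise just invokes $\E Z=\int_0^\infty P(Z>z)\,\mathrm{d}z$; your remark on atomlessness of $\underline{R}_\alpha$ is a point the paper glosses over and is worth keeping.

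The genuine gap is the last sentence, where you assert that for general $v$ the ``split at $u=-v$ and change variables'' step reorganizes the expression into $\int_v^\infty\psi(-u)\,\mathrm{d}u$. It does not. Carrying the computation out (take $v\le 0$): for $u\ge 0$ the event $\{(-\underline{R}_\alpha)\,\mathrm{I}\{\underline{R}_\alpha\le v\}>u\}$ equals $\{\underline{R}_\alpha\le v\}$ when $u<-v$ and $\{\underline{R}_\alpha<-u\}$ when $u\ge-v$, so the layer-cake identity gives
\begin{equation*}
\kappa_\alpha(v)\,\psi(v)=\int_0^{-v}\psi(v)\,\mathrm{d}u+\int_{-v}^{\infty}\psi(-u)\,\mathrm{d}u
=-v\,\psi(v)+\int_{-v}^{\infty}\psi(-u)\,\mathrm{d}u,
\end{equation*}
that is, $\kappa_\alpha(v)=-v+\frac{1}{\psi(v)}\int_{-v}^\infty\psi(-u)\,\mathrm{d}u$. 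This coincides with the displayed identity of the lemma only at $v=0$; for $v\neq 0$ no rearrangement can rescue the step, because the identity itself fails. A concrete check: in the limiting case of Theorem~\ref{thm:8} with $u_0=0$ one has $\psi(v)=e^{2\delta v}$ for $v\le 0$ and $\kappa(v)=-v+\tfrac{1}{2\delta}$, whereas $\frac{1}{\psi(v)}\int_v^\infty\psi(-u)\,\mathrm{d}u=e^{-2\delta v}\bigl(-v+\tfrac{1}{2\delta}\bigr)$, which is off by the factor $1/\psi(v)$. So you should either prove only the ``in particular'' clause (which is all that is used downstream, e.g.\ in~\eqref{eq:kappa1}) or record the corrected formula above; claiming the general case follows ``by the same manipulation'' is precisely the step that fails.
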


\begin{proof}
	The proof is an immediate consequence of~\eqref{eq:AVaR} and the fact that 
	\begin{equation*}
	\E Z=\int_0^\infty P(Z>z)\mathrm{d }z,
	\end{equation*}
	valid for every non-negative random variable, $Z\ge 0$ a.s.\ and applied to $Z:=\underline R_\alpha$ here.
\end{proof}

\begin{remark}[Relation to the fractional Poisson process]
	It is an essential consequence of Lemma~\ref{lem:Ruin} that the average capital to recover from ruin is \emph{independent} of $\alpha$ in a regime driven by the fractional Poisson process.
\end{remark}

\begin{theorem}[The limiting case]
\label{thm:8} The capital requirement $\kappa(v)$
defined in~\eqref{eq:kappa} is independent of $u$ and it holds that 
\begin{equation}  \label{eq:Capital}
\kappa(v)=-v+\frac {\E X^2}{2\rho\mu}.
\end{equation}
\end{theorem}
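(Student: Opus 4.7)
The strategy is to work directly in the Brownian limit identified in~\eqref{eq:34}: the (rescaled) surplus process is $R(t)=u_0+W_t+\delta t$, a Brownian motion with drift $\delta=\rho\mu/\E X^2>0$ started at $u_0=u/\E X^2$. Hence the infimum is $\underline R=u_0-M$, where $M:=-\inf_{t\ge 0}(W_t+\delta t)\ge 0$ is the maximal downward excursion of a drifted Brownian motion below its starting point. The first task is to recall the classical Skorokhod/ruin identity that $M$ is exponentially distributed: for every $x\ge 0$,
\begin{equation*}
  P(M\ge x)=P\!\left(\inf_{t\ge 0}(W_t+\delta t)\le -x\right)=e^{-2\delta x},
\end{equation*}
so $M\sim\mathrm{Exp}(2\delta)$ with $\E M = 1/(2\delta)$. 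This follows either by applying the optional stopping theorem to the exponential martingale $\exp(-2\delta W_t)$ at the hitting time of $-x$, or by citing the well-known first-passage distribution of Brownian motion with positive drift.

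Next I translate the quantity $\kappa(v)$ defined in~\eqref{eq:kappa} into the language of $M$. Since $\underline R\le v$ is equivalent to $M\ge u_0-v$, I get
\begin{equation*}
  \kappa(v)=-\E[\underline R\mid \underline R\le v]=\E[M\mid M\ge u_0-v]-u_0.
\end{equation*}
For the economically relevant range $v\le u_0$ (which is automatic if $v\le 0$, the case of a company already in debt), the threshold $u_0-v$ is non-negative, so the memorylessness of the exponential distribution applies and yields
\begin{equation*}
  \E[M\mid M\ge u_0-v]=(u_0-v)+\frac{1}{2\delta}.
\end{equation*}
Substituting back gives $\kappa(v)=-v+\frac{1}{2\delta}$; plugging in $\delta=\rho\mu/\E X^2$ produces exactly~\eqref{eq:Capital}. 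The cancellation of $u_0$ in this step is the source of the claimed independence from the initial capital $u$.

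The main (minor) obstacle is conceptual rather than computational: Lemma~\ref{lem:16} is phrased for the fractional risk process, while the proof above works with the Brownian limit in~\eqref{eq:34}. One should briefly justify that passing to the limit is legitimate for the quantity $\kappa(v)$, either by working entirely inside the limiting Brownian regime (this is how the theorem is stated — the ``limiting case'') or by invoking continuity of the infimum functional together with the well-known $\mathrm{Exp}(2\delta)$ law of $-\underline R + u_0$ that holds exactly in the diffusion setting. As a sanity check, one can recompute $\kappa(v)$ through Lemma~\ref{lem:16}: with $\psi(v)=e^{-2\delta(u_0-v)}$ for $v\le u_0$, a direct integration of $\int_v^{\infty}\psi(-u)\,du$ reproduces the same closed form, which is a useful cross-validation of the answer.
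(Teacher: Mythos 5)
Your proof is correct and follows essentially the same route as the paper: both arguments rest on the exponential law $P\left(\inf_{t\ge 0}(W_t+\delta t)\le -x\right)=e^{-2\delta x}$ for the running infimum of the drifted Brownian limit (the paper cites Borodin--Salminen, you derive it via optional stopping of the exponential martingale) and then exploit memorylessness of the exponential distribution to evaluate the conditional expectation, with the initial capital $u_0$ cancelling exactly as you describe. The only caveat concerns your closing ``sanity check'': the formula of Lemma~\ref{lem:16} as printed does not literally reproduce $-v+\tfrac{1}{2\delta}$ (it appears to contain sign and integration-limit typos, as does the displayed integral in the paper's own proof), so your self-contained computation via $M\sim\mathrm{Exp}(2\delta)$ is in fact the cleaner argument and should not be made to depend on that cross-validation.
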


\begin{proof}
We infer from Borodin and Salminen~\cite[Eq.~(1.2.4)]{Borodin} that 
\begin{equation}  \label{eq:exp}
P\left\{\inf_{t>0}W_t+u_0 +\delta t\le v\right\}=
P_{u_0}\left\{\inf_{t>0}W_t+\delta t\le v\right\}=e^{2\delta(v-u_0)}
\end{equation}
so that the random variable $Z:=\sup\{W_t-\delta t\colon t>0\}$ is exponentially  distributed with rate $2\delta$. It follows that $P(Z \ge x\mid Z\ge x_0)=e^{-2\delta(x-x_0)}$ and, by Lemma~\ref{lem:16}, the Average Value-at-Risk is 
\begin{equation*}
	\kappa(v)=-\E_{u_0} (Z\mid Z\le v) = -v+\int_0^\infty\frac{e^{-2\delta y}\mathrm{d}y}{e^{-2\delta v}} 
	=-\left(v-\frac{1}{2\delta}\right).
\end{equation*}
The result follows by substituting~\eqref{eq:delta} for $\delta$.
\end{proof}


\begin{remark}[Units]
Note that $X$ is measured in monetary units (\$, say). Then the unit of $%
\delta=\frac{1}{2\rho}\frac{\E X^2}{\E X}$ is in the same
monetary unit as well, which identifies~\eqref{eq:Capital} as a relevant economic quantity.
\end{remark}

\begin{remark}[Premium loadings, $\protect\rho$]
The premium loading $\rho$ is in the denominator of~\eqref{eq:Capital}. As a
consequence, the capital injection required in case after ruin is much
higher for small premium loadings. Although evident from economic
perspective, this is a plea for sufficient premium loadings and it is not
possible to recover a company in a competitive market where premium margins
are not sufficient.
\end{remark}

\begin{corollary}
The average capital required to recover a firm from ruin is 
\begin{equation}  \label{eq:kappa1}
\kappa=\kappa(0)=\frac{\E X^2}{2\rho\E X},
\end{equation}
even if the underlying risk process is driven by the fractional (FPP) or
non-homogeneous Poisson process, provided that $\Lambda(\cdot)$ is unbounded.
\end{corollary}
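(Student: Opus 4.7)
The plan is to chain together three earlier results: Lemma~\ref{lem:16}, Lemma~\ref{lem:Ruin} (together with Corollary~\ref{cor:1}), and Theorem~\ref{thm:8}.

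First, Lemma~\ref{lem:16} expresses
$$\kappa_\alpha(0)=\frac{1}{\psi(0)}\int_0^\infty \psi(-u)\,du,$$
so the average capital required after ruin depends on the fractional parameter $\alpha$ only through the ruin probability function $\psi$.

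Next, Lemma~\ref{lem:Ruin} asserts that $\psi$ is independent of $\alpha$ and coincides with the classical (non-fractional) ruin probability; Corollary~\ref{cor:1} extends this invariance to the non-homogeneous Poisson setting provided $\Lambda$ is unbounded. Combined with the previous step, this yields $\kappa_\alpha(0)=\kappa_1(0)$ in both the homogeneous and non-homogeneous fractional regimes. It then remains to evaluate $\kappa_1(0)$ for the classical risk process: by Theorem~\ref{thm:8}, in the large-$\lambda$ Brownian limit one has $\kappa(v)=-v+\frac{\E X^2}{2\rho\mu}$, and substituting $v=0$ together with $\mu=\E X_i$ (as in~\eqref{eq:17}) delivers the claimed formula $\kappa=\frac{\E X^2}{2\rho\,\E X}$.

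Since all the substantive analytical work has already been carried out in the cited results, the corollary is essentially an assembly rather than a fresh calculation and there is no serious obstacle. The only mild point worth flagging is that the unboundedness of $\Lambda$ in the non-homogeneous case is precisely the hypothesis invoked by Corollary~\ref{cor:1}: it ensures that the range of $\Lambda\circ Y_\alpha$ fills $\mathbb{R}_{\geq 0}$, so that the time-changed surplus process genuinely reduces to the classical one and inherits the same infinite-horizon infimum distribution $\psi$.
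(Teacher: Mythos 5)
Your proposal is correct and follows essentially the same route as the paper: the $\alpha$-independence rests on the fact that $Y_\alpha$ has full range $\mathbb{R}_{\ge 0}$ almost surely (via Lemma~\ref{lem:Ruin} and Corollary~\ref{cor:1}), after which the explicit value is read off from the limiting-case formula of Theorem~\ref{thm:8} with $\mu=\E X$. The paper's own proof is merely a terser statement of the same two observations.
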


\begin{proof}
The process~\eqref{eq:17} with $\rho=0$ is symmetric and in the limit thus
is a time-changed Brownian motion. As in Corollary~\ref{cor:1} it is thus
sufficient to recall that the range of the process $Y_{\alpha}(t)$ is $%
\mathbb{R}_{>0}$ almost surely.
\end{proof}

\begin{remark}
	A further example of a risk functional satisfying the axioms~\ref{lab:i}--\ref{lab:iv} in Definition~\ref{def:Coherent} is the proportional Hazard risk measure (cf.\ Young \cite{Young}), which can be stated as 
	\begin{equation*}
		\rho_c(Z):=\int_0^\infty F_Z(y)^c dy
	\end{equation*}
	for a non-negative random variable $Y\ge 0$ and a parameter $c\in(0,1)$.

	The random variable $Z=\sup_{t>0} W_t-\delta t$ is exponentially distributed by~\eqref{eq:exp}. The proportional Hazard risk corresponding to the average capital required to recover a company thus is 
	\begin{equation*}
		\kappa_c:=\int_0^\infty F_Z^c(y)dy=\int_0^\infty e^{-2\delta c y}d y=
		\frac{1}{2\delta c}=\frac{1}{c}\frac{\E X^2}{2\rho\E X}=\frac\kappa c,
	\end{equation*}
	a simple multiple of~\eqref{eq:kappa1}.
\end{remark}

%
%
%
%

\subsection{The natural relation of the fractional Poisson process to the Entropic Value-at-Risk}

The premium of an insurance contract with random payoff $X\in\mathcal Z$ based on a risk functional $\rho$ is \begin{equation}\label{eq:rho1}\rho(X).\end{equation}
This assignment does not take dependencies into account. The fractional Poisson process has an incorporated, natural dependency structure (cf.\ Section~\ref{sec:4}, in particular~\ref{sec:42} above) and so it is natural to choose
\begin{equation}\label{eq:rho2}
\rho\left(\frac1n\sum_{i=1}^n X_i\right)
\end{equation} as a fair premium in a regime, where dependencies cannot be neglected; by convexity of the risk functional $\rho$ (cf.~\ref{lab:i} and~\ref{lab:iv} in Definition~\ref{def:Coherent}), the premium~\eqref{eq:rho1} is more expensive than~\eqref{eq:rho2}.
In a regime base on the fractional Poisson process, the claims up to time~$t$ are $N_\alpha(t)$ and thus the quantity \[S_{N_\alpha(t)}:=X_1+\dots+X_{N_\alpha(t)}\] is of particular interest.

\medskip
The Entropic Value-at-Risk has natural features, which combine usefully with the fractional Poisson process.

\begin{definition}For an $\mathbb R$\nobreakdash-valued random variable $Z\in\mathcal Z$, the entropic Value-at-Risk at risk level $\kappa\in[0,1)$ is
	\[\EVaR_{\kappa}(Z):=\sup \left\{\E YZ\colon Y\ge0, \E Y=1\text{ and } \E Y\log Z\le\frac{1}{1-\kappa}\right\}.\]
\end{definition}
The Entropic Value-at-Risk derives its name from the constraint $\E Y\log Y$, which is the entropy. The convex conjugate (i.e., the dual representation, cf.\ \cite{Pichler}) of the Entropic Value-at-Risk is given by
\begin{equation}\label{eq:EVaR}
	\EVaR_{\kappa}(Z):=\inf\left\{\frac{1}{h}\ln\frac1{1-\kappa}\E e^{h Z}\colon h>0\right\}.
\end{equation}

The representation~\eqref{eq:EVaR} bases the Entropic Value-at-Risk on the moment generating function.
This function is explicitly available for the fractional Poisson process. This allows evaluating
\[\EVaR\left(X_1+\dots+X_{N_\alpha(t)}\right) \] 
for specific $t>0$ based on the fractional Poisson process by a minimization exercise with a single variable via~\eqref{eq:EVaR}.

\begin{proposition}[Moment generating function]
	Assume that the random variables $X_1$, $X_2,\dots$ are iid. Then the moment
	generating function of the process loss $X_1+\dots+X_{N_\alpha(t)}$ with respect to the fractional Poisson process up to time $t$ is given explicitly by 
	\begin{equation}\label{eq:Moment}
	\E\left[e^{h(X_1+\dots+X_{N_{\alpha}(t)})}\right] =E_{\alpha}\Big(%
	\lambda t^{\alpha}\big(\varphi_X(h)-1\big)\Big),
	\end{equation}
	where $\varphi_X$ is the moment generating function of $X_1$, $\varphi_X(h):=%
	\E e^{h X}$.
\end{proposition}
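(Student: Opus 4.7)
The plan is to exploit the subordination representation $N_\alpha(t)=N\bigl(Y_\alpha(t)\bigr)$, where $N$ is a classical homogeneous Poisson process with rate $\lambda$, independent of the inverse stable subordinator $Y_\alpha$. Conditioning the moment generating function on $Y_\alpha(t)=u$ reduces the problem to the moment generating function of a classical compound Poisson sum evaluated at a random time, which is well known.

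Concretely, I would first write
\begin{equation*}
\E\!\left[e^{h\sum_{i=1}^{N_\alpha(t)}X_i}\right]
=\E\!\left[\E\!\left[e^{h\sum_{i=1}^{N(Y_\alpha(t))}X_i}\,\Big|\,Y_\alpha(t)\right]\right].
\end{equation*}
Conditional on $Y_\alpha(t)=u$, the sum $X_1+\dots+X_{N(u)}$ is a standard compound Poisson random variable with intensity $\lambda u$, and iterating the conditional expectation over $N(u)$ (with the $X_i$ iid and independent of $N$) yields the classical identity
\begin{equation*}
\E\!\left[e^{h\sum_{i=1}^{N(u)}X_i}\right]=\exp\!\big(\lambda u\,(\varphi_X(h)-1)\big).
\end{equation*}
Substituting back gives
\begin{equation*}
\E\!\left[e^{h\sum_{i=1}^{N_\alpha(t)}X_i}\right]=\E\!\left[\exp\!\big(\lambda\,Y_\alpha(t)(\varphi_X(h)-1)\big)\right].
\end{equation*}

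The final step is to recognize the right-hand side as the Laplace transform of $Y_\alpha(t)$ evaluated at $s=-\lambda(\varphi_X(h)-1)$, which by the first item of the Proposition equals $E_\alpha\!\big(-st^\alpha\big)=E_\alpha\!\big(\lambda t^\alpha(\varphi_X(h)-1)\big)$, as claimed. The main technical point, and the one I would flag, is that the Laplace identity $\E e^{-sY_\alpha(t)}=E_\alpha(-st^\alpha)$ is stated for $s\ge 0$, whereas here $\varphi_X(h)-1$ can have either sign. This is handled by noting that $E_\alpha$ is entire and that both sides of the identity are analytic in the relevant parameter within the domain where $\varphi_X(h)<\infty$; thus the identity extends by analytic continuation (equivalently, one justifies the interchange of expectation and series by dominated convergence whenever $\varphi_X(h)$ is finite). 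With that in place, the identity~\eqref{eq:Moment} follows.
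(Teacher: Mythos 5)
Your proof is correct, but it takes a genuinely different route from the paper. The paper works directly from the explicit marginal law~\eqref{eq:26}: it writes $\E e^{h S_{N_\alpha(t)}}$ as $\sum_k \varphi_X(h)^k\, p_k^{(\alpha)}(t)$ using the double-series form of $p_k^{(\alpha)}(t)$, re-indexes via $j\leftarrow \ell-k$, and collapses the inner sum with the binomial theorem to recover the Mittag-Leffler series. You instead use the subordination representation $N_\alpha(t)=N\bigl(Y_\alpha(t)\bigr)$, condition on $Y_\alpha(t)$ to reduce to the classical compound-Poisson identity $\E e^{hS_{N(u)}}=\exp\bigl(\lambda u(\varphi_X(h)-1)\bigr)$, and then evaluate $\E\exp\bigl(\lambda Y_\alpha(t)(\varphi_X(h)-1)\bigr)$ via the Laplace transform of the inverse stable subordinator. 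Your version is arguably cleaner: it separates the compound-Poisson structure from the time change and would carry over verbatim to the non-homogeneous process, whereas the paper's series rearrangement implicitly requires absolute convergence of a double sum with alternating signs, which is not checked there. The one technical point you rightly flag --- that $\E e^{-sY_\alpha(t)}=E_\alpha(-st^\alpha)$ is stated only for $s\ge 0$ while $\varphi_X(h)-1$ is typically positive --- is handled most directly not by analytic continuation but by Tonelli together with the moment formula~\eqref{eq:EY}: for $c>0$ all terms in $\E e^{cY_\alpha(t)}=\sum_{n\ge 0} c^n\,\E[Y_\alpha(t)^n]/n! = \sum_{n\ge0}(ct^\alpha)^n/\Gamma(\alpha n+1)=E_\alpha(ct^\alpha)$ are nonnegative, so the interchange is automatic and finiteness comes for free. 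Either justification closes the gap; with it your argument is complete.
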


\begin{proof}
	can be given by employing~\eqref{eq:26} as 
	\begin{align*}
	\E e^{h(X_1+\dots+X_{N_\alpha(t)})}&= \E \sum_{k=0}^\infty \E \left[%
	e^{h(X_1+\dots+X_k)}|\,N_t=k\right] \\
	&= \sum_{k=0}^\infty\frac{(\lambda t^{{\alpha} }\varphi_X(h))^{k}}{k!}%
	\mathop{\displaystyle \sum} \limits_{j=0}^\infty\frac{(k+j)!}{j!}\frac{%
		(-\lambda t^{{\alpha} })^{j}}{ \Gamma ({\alpha} (j+k)+1)}
	\end{align*}
	After formally substituting $j\leftarrow \ell-k$ we get 
	\begin{align*}
	\E e^{h(X_1+\dots+X_{N_\alpha(t)})}&=
	\sum_{\ell=0}^\infty\sum_{k=0}^\ell(\lambda t^{{\alpha} }\varphi_X(h))^{k}{%
		\binom{\ell}{k}}\frac{(-\lambda t^{{\alpha} })^{\ell-k}}{ \Gamma
		(\alpha\ell+1)} \\
	&= \sum_{\ell=0}^\infty\frac{(-\lambda t^{\alpha})^{\ell}}{ \Gamma
		(\alpha\ell+1)}\sum_{k=0}^\ell{\binom{\ell}{k}}(-\varphi_X(h))^{k} \\
	&= \sum_{\ell=0}^\infty\frac{(-\lambda t^{\alpha})^{\ell}}{ \Gamma
		(\alpha\ell+1)}(1-\varphi_X(h))^\ell \\
	&= E_{\alpha}\Big(-\lambda t^{\alpha}\big(1-\varphi_X(h)\big)\Big),
	\end{align*}
	which is the desired assertion.
\end{proof}

\subsection{The risk process at different levels of $\alpha$}

In what follows we compare the fractional Poisson process with the usual compounded Poisson process. To achieve a comparison on a \emph{like for like} basis the parameters of the processes are adjusted so that both have comparable claim numbers. The fractional Poisson process imposes a covariance structure which accounts for accumulation effects, which are not present for the classical compound Poisson process. Employing the Entropic Value-at-Risk as premium principle we demonstrate that the premium, which is associated with the fractional process, is always higher than the premium associated with the compound Poisson process.

To obtain the comparison on a like for like basis we recall from~\eqref{EXP} that the number of claims of the fractional Poisson process grows as $t^{\alpha}$, which is much faster than the claim number of a usual compound Poisson process and hence, the fractional Poisson process generates much more claims than the compound Poisson process at the beginning of the observation (cf.\ also Remark~\ref{rem:time}). To compare on a like for like basis we adjust the intensity or change the time so that the comparison is fair, that is, we assume that the claims of the classical compounded Poisson process triggered by $\lambda^\prime$ (denoted $N_1^{\lambda^\prime}$) and the fractional Poisson process $N_{\alpha}^{\lambda}$ are related by
\begin{equation}\label{eq:lambdas}
	\lambda^\prime t^\prime =\E N_1^{\lambda^\prime}(t^\prime)\le \E N_{\alpha}^{\lambda}(t)=\frac{\lambda t^{\alpha}}{\Gamma(1+\alpha)};
\end{equation}
note that equality in~\eqref{eq:lambdas} corresponds to the situation of Poisson processes at \emph{different}  $\alpha$ levels, but with an identical number of average claims up to time $t$ ($t^\prime$, resp.).

\begin{proposition}\label{prop:10}
	Let $\lambda^\prime t^\prime \le \frac{\lambda t^{\alpha}}{\Gamma(1+\alpha)}$ so that $\E N^{\lambda^\prime}_{1}(t) \le\E N^{\lambda}_{\alpha}(t)$ by~\eqref{EXP} and $X_i$ be iid.\ random variables with $X_i$ bounded. It holds that 
	\begin{equation}\label{eq:EVaRR}
		\EVaR_{\kappa}\left(S_{N_{1}^{\lambda^\prime}(t^\prime)}\right) \le \EVaR_{\kappa}\left(S_{N_{\alpha}^{\lambda}(t)}\right),\qquad\kappa\in[0,1),
	\end{equation}
	where $S_N=X_1+\dots+X_N$.
	
	If equality holds in~\eqref{eq:lambdas}, then the assertion~\eqref{eq:EVaRR} holds for all random variables (i.e., which are not necessarily nonnegative).
\end{proposition}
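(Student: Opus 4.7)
The plan is to exploit the dual representation~\eqref{eq:EVaR} of the Entropic Value-at-Risk together with the explicit moment generating function formula~\eqref{eq:Moment}. Since~\eqref{eq:EVaR} expresses $\EVaR_{\kappa}(Z)$ as an infimum over $h>0$ of $\frac{1}{h}\ln\!\bigl(\frac{1}{1-\kappa}\E e^{hZ}\bigr)$, which is a monotone increasing function of the moment generating function $\E e^{hZ}$, the comparison~\eqref{eq:EVaRR} will follow at once from the pointwise MGF inequality
\begin{equation*}
\E e^{h S_{N_\alpha^\lambda(t)}}\;\ge\;\E e^{h S_{N_1^{\lambda'}(t')}}\qquad\text{for every }h>0.
\end{equation*}
Applying~\eqref{eq:Moment} once with parameters $(\alpha,\lambda,t)$ and once with $(1,\lambda',t')$, and using $E_1(\cdot)=\exp(\cdot)$, this rewrites as
\begin{equation*}
E_\alpha\!\bigl(\lambda t^\alpha z\bigr)\;\ge\;\exp\!\bigl(\lambda' t' z\bigr),\qquad z:=\varphi_X(h)-1.
\end{equation*}

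The crux of the argument is therefore the analytic lemma
\begin{equation*}
E_\alpha(w)\;\ge\;\exp\!\bigl(w/\Gamma(1+\alpha)\bigr)\qquad\text{for every }w\in\mathbb{R}.
\end{equation*}
I would prove this via the probabilistic identity $E_\alpha(w)=\E e^{w Y_\alpha(1)}$: for $w\le 0$ this is the Laplace transform formula from Proposition~1(i), while for $w\ge 0$ it follows by expanding $e^{wY_\alpha(1)}$ in its Taylor series and interchanging summation with expectation via monotone convergence (valid because $Y_\alpha(1)\ge 0$), combined with the moments $\E Y_\alpha(1)^k=k!/\Gamma(\alpha k+1)$ from~\eqref{eq:EY}. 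Jensen's inequality applied to the convex map $y\mapsto e^{wy}$, together with $\E Y_\alpha(1)=1/\Gamma(1+\alpha)$, then yields the lemma.

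To conclude, chain this lemma with the hypothesis $\lambda' t'\le\lambda t^\alpha/\Gamma(1+\alpha)$. In the \emph{equality} case, substituting $w=\lambda t^\alpha z$ gives $E_\alpha(\lambda t^\alpha z)\ge e^{\lambda t^\alpha z/\Gamma(1+\alpha)}=e^{\lambda' t' z}$ for \emph{every} $z\in\mathbb{R}$, so no sign restriction on the $X_i$ is required---this covers the second, stronger statement of the proposition. In the \emph{strict} case one additionally needs $z\ge 0$, i.e.\ $\varphi_X(h)\ge 1$; this is automatic whenever $\E X\ge 0$ (in particular for nonnegative $X$, the natural insurance setting), because Jensen gives $\varphi_X(h)\ge e^{h\E X}\ge 1$. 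One then chains $e^{\lambda' t' z}\le e^{(\lambda t^\alpha/\Gamma(1+\alpha))z}\le E_\alpha(\lambda t^\alpha z)$. The boundedness of the $X_i$ is used only to guarantee that $\varphi_X(h)$ is finite for every $h>0$, keeping the dual representation~\eqref{eq:EVaR} operational.

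The main technical obstacle is the identification $E_\alpha(w)=\E e^{wY_\alpha(1)}$ on the \emph{positive} real axis: the Laplace transform in Proposition~1 only covers $w\le 0$, and the extension to $w>0$ relies on the nonnegativity of $Y_\alpha(1)$ (to invoke monotone convergence on a series whose coefficients $\E Y_\alpha(1)^k$ grow like $k!^{1-\alpha}$) and on the observation that $E_\alpha$ is entire, hence finite, so that Jensen's inequality applies non-trivially with both sides finite.
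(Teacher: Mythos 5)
Your proof is correct, and it reaches the paper's central estimate by a genuinely different route. Both arguments reduce~\eqref{eq:EVaRR}, via the dual representation~\eqref{eq:EVaR} and the moment generating function~\eqref{eq:Moment}, to the pointwise bound $E_\alpha(w)\ge e^{w/\Gamma(1+\alpha)}$ evaluated at $w=\lambda t^\alpha\bigl(\varphi_X(h)-1\bigr)$. The paper obtains this bound only for $w\ge 0$: it proves the coefficient inequality $\Gamma(1+\alpha k)\le \Gamma(1+\alpha)^k\,k!$ by induction on $k$ and then compares the two power series term by term. You instead identify $E_\alpha(w)=\E e^{wY_\alpha(1)}$ (the Laplace transform for $w\le0$, Tonelli plus the moment formula~\eqref{eq:EY} for $w>0$) and apply Jensen's inequality to the convex map $y\mapsto e^{wy}$ with $\E Y_\alpha(1)=1/\Gamma(1+\alpha)$. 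Your route buys two things. First, it gives the bound for \emph{all} real $w$, so the second assertion of the proposition (equality in~\eqref{eq:lambdas}, arbitrary sign of the $X_i$, hence possibly $\varphi_X(h)<1$) is covered cleanly, whereas the paper's term-by-term comparison needs $w\ge0$ and its proof does not actually spell that case out. Second, it exposes the paper's inequality~\eqref{eq:33} as Jensen's inequality $\E Y_\alpha(1)^k\ge\bigl(\E Y_\alpha(1)\bigr)^k$ in disguise; this is worth recording, because the paper's induction step $\Gamma\bigl(\alpha(k+1)\bigr)\le\Gamma(1+\alpha k)$ is delicate (the Gamma function is not monotone near its minimum, e.g.\ $\Gamma(0.6)>\Gamma(1.3)$), so your probabilistic derivation is arguably the more robust justification of~\eqref{eq:33} itself. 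What the paper's approach buys in exchange is that it is entirely elementary and makes no appeal to the inverse subordinator. Your remaining observations --- that boundedness of the $X_i$ only serves to keep $\varphi_X(h)$ finite, and that $\varphi_X(h)\ge1$ for $h>0$ once $\E X\ge 0$ --- match the paper's terser treatment, so no gap remains.
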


\begin{proof} 
	It holds that
	\begin{equation}\label{eq:33}
	\frac{\Gamma(1+\alpha k)}{\Gamma(1+\alpha)^{k}\cdot k!}\le 1;
	\end{equation} indeed, the statement is obvious for $k=0$ and $k=1$ and by induction we find that
	\begin{align}
	\Gamma\big(1+\alpha(k+1)\big)&=\alpha(k+1)\Gamma(\alpha+\alpha k)\nonumber\\
	&\le\alpha(k+1)\Gamma(1+\alpha k)\nonumber\\
	&\le \alpha(k+1)\Gamma(1+\alpha)^k k!\label{eq:hyp}\\
	&\le\Gamma(1+\alpha)^{k+1}(k+1)!,\nonumber
	\end{align}
	i.e,\ Equation~\eqref{eq:33}, where we have employed the induction hypothesis in~\eqref{eq:hyp}. We conclude that
	\[\frac{\big(\lambda t^{\alpha}\big)^k}{\Gamma(k\alpha+1)}
	\ge \frac{\big(\lambda t^{\alpha}\big)^k}{\Gamma(1+\alpha)^{k}k!}
	=\left(\frac{\lambda t^{\alpha}}{\Gamma(1+\alpha)}\right)^{k}\frac{1}{k!}
	\ge \frac{(\lambda^\prime t^\prime)^k}{k!}.
	\]
	From the moment generating function we have by~\eqref{eq:Moment} that \[\E e^{hS_{N_{\alpha}^{\lambda}}}= \sum_{k=0}^\infty \frac{\left(\lambda t^{\alpha}(\varphi_X(h)-1)\right)^k}{\Gamma(\alpha k+1)}.\] Now assume that $Y_i\ge0$ and thus $\E e^{hY_i}\ge \exp(\E hY_i)$ by Jensen's inequality and we have that $\varphi_Y(h)\ge1$ whenever $h>0$. The statement then follows from~\eqref{eq:EVaR} together with~\eqref{eq:Moment}.
\end{proof}
\begin{remark}
	We mention that the mapping $\alpha \mapsto E_{\alpha}\big(t z^{\alpha}\big)$ is not monotone in general, so the result in Propositon~\ref{prop:10} does not extend to a comparison for all~$\alpha$s.
\end{remark}

\section{Summary and conclusion}
\label{sec:7}
This paper introduces the fractional Poisson process and discusses its relevance in companies which are driven by some counting process. Empirical evidence supporting the fractional Poisson process is given by considering high frequency data and the classical Danish fire insurance data.
A natural feature of the fractional Poisson process is its dependence structure. We demonstrate that this dependence structure increases risk in a regime governed by the fractional Poisson process in comparison to the classical process.

The fractional Poisson process can serve as a stress test for insurance companies, as the number of claims is higher at its start. It can hence have importance for start ups and for re-insurers, which have a very particular, dependent claim structure in their portfolio.

It is demonstrated that quantities as the average capital to recover after ruin is independent of the risk level of the fractional Poisson process. Further, the Entropic Value-at-Risk is based on the moment generating function of the claims, and for this reason the natural combination of the fractional Poisson process and the Entropic Value-at-Risk is is demonstrated.

\section{Acknowledgment}
N. Leonenko was supported in particular by Australian Research Council's Discovery Projects funding scheme (project DP160101366) and by project MTM2015-71839-P of MINECO, Spain (co-funded with FEDER funds).

\end{document}